\title{Box spaces of the free group that neither contain expanders nor embed into a Hilbert space}
\author{Thiebout Delabie\thanks{Universit\'{e} de Neuch\^{a}tel, \url{thiebout.delabie@unine.ch}, supported by Swiss NSF grant no. 200021 163417} 
\ and Ana Khukhro\thanks{Universit\'{e} de Neuch\^{a}tel, \url{anastasia.khukhro@unine.ch}}}
\date{\today}
\newtheorem*{thm}{Main Theorem}
\newtheorem{theorem}{Theorem}[section]
\newtheorem{lemma}[theorem]{Lemma}
\newtheorem{proposition}[theorem]{Proposition}
\newtheorem{corollary}[theorem]{Corollary}
\newtheorem{definition}[theorem]{Definition}
\newtheorem{remark}[theorem]{Remark}
\theoremstyle{definition}
\newtheorem*{acknow}{Acknowledgements}
\newcommand{\Id}{\operatorname{Id}}
\newcommand{\PSL}[2]{\operatorname{PSL}_{#1}(#2)}
\newcommand{\PGL}[2]{\operatorname{PGL}_{#1}(#2)}
\newcommand{\N}{\mathbb{N}}
\newcommand{\Z}{\mathbb{Z}}
\newcommand{\R}{\mathbb{R}}
\newcommand{\C}{\mathbb{C}}
\newcommand{\Ha}{\mathbb{H}}
\newcommand{\Qo}[2]{F_3/(N_{#1}\cap\Theta(N_{#2}))}
\newcommand{\Ob}[1]{\mathcal{O}\left(#1\right)}
\newcommand{\norm}[1]{\left\|#1\right\|}
\newcommand{\p}{5}
\newcommand{\girth}{\operatorname{girth}}
\begin{document}

\maketitle

\begin{abstract}
We construct box spaces of a free group that do not coarsely embed into a Hilbert space, but do not contain coarsely nor weakly embedded expanders. We do this by considering two sequences of subgroups of the free group: one which gives rise to a box space which forms an expander, and another which gives rise to a box space that can be coarsely embedded into a Hilbert space. We then take certain intersections of these subgroups, and prove that the corresponding box space contains generalized expanders. We show that there are no weakly embedded expanders in the box space corresponding to our chosen sequence by proving that a box space that covers another box space of the same group that is coarsely embeddable into a Hilbert space cannot contain weakly embedded expanders. 
\end{abstract}

\section{Introduction}
Given a residually finite, finitely generated group $G$, we say that a sequence of nested finite index normal subgroups of the group is a \emph{filtration} if this sequence of subgroups has trivial intersection. Given such a filtration $\{N_i\}$ of $G$ and fixing a generating set of $G$, we can consider each finite quotient $G/N_i$ with the Cayley graph metric induced by image the generating set of $G$. 
\begin{definition}
The box space $\Box_{N_i} G$ of $G$ with respect to a filtration $\{N_i\}$ is the disjoint union of the finite quotients $\{G/N_i\}$ with their Cayley graph metrics, with the distance between different quotients defined to be at least the larger of their diameters. 
\end{definition}
Note that different choices of generating set for $G$ give rise to coarsely equivalent box spaces (\cite{Kh12}).
Studying the geometric properties of this space essentially reduces to studying geometric properties that the finite quotients $G/N_i$ have \emph{uniformly}. 
To give an example, let us first introduce the following notion.
\begin{definition}
Given two metric spaces $(X, d_X)$ and $(Y,d_Y)$, we say that a map $f:X \rightarrow Y$ is a coarse embedding if there exist non-decreasing control functions $\rho_{\pm}:\mathbb{R}_+ \rightarrow \mathbb{R}_+$ such that for all $a,b \in X$, we have
$$\rho_-(d_X(a,b))\leq d_Y(f(a),f(b)) \leq \rho_+(d_X(a,b)).$$
\end{definition}
In this way, a box space coarsely embeds into a Hilbert space if and only if all of the components $G/N_i$ admit coarse embeddings into a Hilbert space with the same control functions $\rho_{\pm}$. For brevity, \emph{``embeddable''} is sometimes used to mean coarsely embeddable into a Hilbert space.

An a priori weaker notion is that of a weak embedding. It was used in \cite{Gro} to construct a group which does not admit a coarse embedding into a Hilbert space.
\begin{definition}
Given a sequence of finite metric spaces $(X_n)_{n\in \mathbb{N}}$, and a metric space $Y$, a sequence of maps $f_n:X_n \rightarrow Y$ is a weak embedding if there is $C>0$ such that each $f_n$ is $C$-Lipschitz, and for all $r>0$, we have 
$$\lim_{n\rightarrow \infty} \sup_{x\in X_n} \frac{|f_n^{-1}(B_Y(f_n(x),r))|}{|X_n|}=0,$$
where $B_Y(y,r)$ denotes the ball of radius $r$ about $y\in Y$.
\end{definition}
When the target space $Y$ is of bounded geometry (i.e. the cardinality of balls is uniformly bounded by some constant depending only on the radius), then the above condition is equivalent to 
$$\lim_{n\rightarrow \infty} \sup_{x\in X_n} \frac{|f_n^{-1}(f_n(x))|}{|X_n|}=0.$$
A coarse embedding of a metrized sequence of finite graphs of bounded geometry into a space $Y$ implies a weak embedding of the sequence of graphs into $Y$.

Box spaces are good sources of examples in the world of bounded geometry metric spaces because their geometric properties often have strong links with algebraic or analytic properties of the parent group $G$. For example, the first explicit construction of expander graphs, due to Margulis (\cite{Mar}), was given in the form of a box space of a group with Kazhdan's property (T).

Expanders are sequences of graphs of bounded degree which are highly connected. These seemingly contradictory properties of sparseness and connectivity make them sought-after objects in computer science, network design, and computational group theory, as well as interesting geometric objects in their own right. The connectivity properties of expanders are also what prevents them from embedding well into Banach spaces. A weakly embedded expander is the obstruction to embedding coarsely into a Hilbert space that was first used in the probabilistic proof of \cite{Gro}. Note that it is unknown whether a weakly embedded expander implies the existence of a coarsely embedded expander.

For a long time, the presence of coarsely (or weakly) embedded expanders was in fact the only known obstruction to a bounded geometry metric space coarsely embedding into a Hilbert space. Note that if one does not impose the condition of bounded geometry, then $\ell^p$ with $p>2$ is a space which does not contain expanders, yet does not admit a coarse embedding into a Hilbert space (\cite{JR}).

An important step towards answering the question of whether expanders are indeed the only possible obstruction was the paper of Tessera \cite{Tes}, in which he was able to give a characterization of spaces which do not embed coarsely into a Hilbert space in terms of \emph{generalized expanders}, which satisfy corresponding Poincar\'{e} inequalities relative to a measure.

In the groundbreaking article \cite{AT}, Arzhantseva and Tessera gave examples of sequences of finite Cayley graphs of uniformly bounded degree which do not contain weakly embedded expanders but do not embed coarsely into a Hilbert space. Their examples make use of \emph{relative expanders}, which are a specific case of generalized expanders. One of their examples is a box space of $\mathbb{Z}^2 \rtimes SL(2,\mathbb{Z})$, a group with relative property (T): this box space does not embed coarsely into a Hilbert space because the parent group does not have the Haagerup property, and it does not contain expanders thanks to a proposition (Proposition 2, \cite{AT}) which shows that expanders cannot be embedded into a sequence of group extensions where the sequence of quotients and the sequence of normal subgroups which make up the extension both embed coarsely into a Hilbert space. They also give constructions of box spaces of wreath products, including an example which admits a fibred coarse embedding into a Hilbert space (i.e. it is a box space of a group with the Haagerup property, see \cite{CWW} for the proof of this equivalence). All of these examples are constructed using sequences of finite groups which do coarsely embed into a Hilbert space, and the non-embeddability of the resulting spaces is encoded in the action of one subgroup on another.

The following problem (\cite{AT}, Section 8: Open Problems) remained open: does there exist a sequence of finite graphs with bounded degree and girth (i.e. the length of the smallest cycle) tending to infinity that does not coarsely embed into a Hilbert space but does not contain a weakly embedded expander? 
The original motivation for this question of Arzhantseva and Tessera was the possibility to use such a sequence for the construction of a group with these properties (although the presence of such a sequence would not guarantee that the group constructed would not contain expanders elsewhere). Arzhantseva and Tessera have since constructed such a group without the use of such a sequence of graphs (\cite{AT}), and the question about the existence of such a large girth sequence remained unanswered.
A natural way to construct such a sequence would be to use a box space of a non-abelian free group. This requires a different method to the one used in \cite{AT}, since there are no obvious ``building blocks'' which can be used to construct the sequence (as with the semidirect products of embeddable groups in \cite{AT}). 

In this article, we answer this question by proving the following theorem.

\begin{thm}
\hypertarget{Main}
There exists a filtration of the free group $F_3$ such that the corresponding box space does not coarsely embed into a Hilbert space, but does not admit a weakly embedded expander sequence. 
\end{thm}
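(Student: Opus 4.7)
The plan is to construct the desired filtration as a carefully chosen ``mix'' of two filtrations of $F_3$ with complementary geometric behaviour. Start with a filtration $\{N_i\}$ of $F_3$ for which the box space $\Box_{N_i}F_3$ forms an expander; such a filtration exists because $F_3$ surjects onto, for instance, the family $\PSL{2}{\mathbb{F}_p}$ which has property $(\tau)$ and gives expander box spaces by pullback. For each $i$, choose a further characteristic finite-index subgroup $\Gamma(N_i)\le N_i$ (a natural candidate is $\Gamma(N_i)=[N_i,N_i]N_i^p$ for a fixed prime $p$, so that $N_i/\Gamma(N_i)$ is an elementary abelian $p$-group), arranged so that $\{\Gamma(N_i)\}$ is itself a filtration and $\Box_{\Gamma(N_i)}F_3$ coarsely embeds into a Hilbert space. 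The embeddability should come from the fact that each $F_3/\Gamma(N_i)$ is a central extension of $F_3/N_i$ by an elementary abelian group, for which Haagerup-style proper affine actions can be built uniformly in $i$.

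The candidate filtration is then $\{N_i\cap\Gamma(N_{j(i)})\}$ for a sufficiently fast-growing function $j(i)$, so that the two natural quotient maps
\[
\Qo{i}{j(i)}\twoheadrightarrow F_3/N_i,\qquad \Qo{i}{j(i)}\twoheadrightarrow F_3/\Gamma(N_{j(i)})
\]
induce box space covers. The function $j(i)$ must be chosen so that the ``expander layer'' $F_3/N_i$ sits at a scale negligible compared to the diameter of $\Qo{i}{j(i)}$; this dilution is precisely what turns the lifted expander into a \emph{generalised} rather than genuine coarsely embedded expander. For the failure of Hilbert space embeddability, I would invoke Tessera's characterisation \cite{Tes}: lift the expander Poincar\'e inequality on $F_3/N_i$ along the covering $\Qo{i}{j(i)}\twoheadrightarrow F_3/N_i$, weighted by the uniform measure on fibres (of constant size, by normality), to produce a sequence of generalised expanders inside $\Box_{N_i\cap\Gamma(N_{j(i)})}F_3$.

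For the absence of coarsely embedded expanders, the heart of the argument is the proposition flagged in the abstract: if a box space $\Box_{L_i}G$ of a finitely generated group covers another box space $\Box_{M_i}G$ of the same group that coarsely embeds into a Hilbert space, then $\Box_{L_i}G$ contains no coarsely embedded expander. The strategy is by contradiction: given a coarse embedding of an expander $Y_n$ into $\Box_{L_i}G$, project it down via the covering to $\Box_{M_i}G$, compose with the Hilbert space embedding, and analyse the ``defect'' caused by the fibres. Either the projection still separates enough pairs of points for the expander Poincar\'e inequality to survive (contradicting Hilbert embeddability of $\Box_{M_i}G$), or a positive proportion of pairs collapses into a single fibre, in which case the expander sits inside a bounded-diameter region, contradicting its unbounded diameter. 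Applying the proposition with $L_i=N_i\cap\Gamma(N_{j(i)})$ and $M_i=\Gamma(N_{j(i)})$ then concludes the proof. The main obstacle is this proposition, since it requires quantitatively reconciling the fibre sizes of the cover with the uniform control functions of the two hypothesised embeddings, and genuinely uses the fact that both box spaces are of the \emph{same} group so that the cover is compatible with the Cayley graph structure on each level.
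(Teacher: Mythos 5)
Your high-level plan matches the paper's architecture: take a filtration $\{N_i\}$ giving expander quotients, pass to ($q$-)homology covers $\Gamma(N_i)=N_i^q[N_i,N_i]$ to get an embeddable box space, intersect along a controlled path, obtain generalized expansion for non-embeddability, and rule out embedded expanders by a covering argument. The ``no expanders'' proposition you describe is essentially \cref{NoExp}, and your intuition that it genuinely uses the common-parent-group hypothesis (so that quotient maps are locally isometric for $i$ large) is correct. However, there are three concrete problems.

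First, the growth of your index function $j(i)$ is backwards. If $j(i)\ge i$ then $\Gamma(N_{j(i)})<N_{j(i)}<N_i$, so $N_i\cap\Gamma(N_{j(i)})=\Gamma(N_{j(i)})$ and the box space degenerates to the embeddable homology-cover box space. What the paper actually needs is the opposite of dilution: the sequence $\Qo{n_i}{k_i}$ must hug the horizontal expander row closely, with $k_i$ very small relative to $n_i$ (the paper imposes $18(k_i+1)\le n_i$ and, more restrictively, a size bound $A_{n_i,k_i}\le q^{\frac{19}{6}n_i}$ so that $|\Qo{n_i}{k_i}|$ is only a small power larger than $|F_3/N_{n_i}|\approx q^{3n_i}$). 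Your picture of ``the expander layer negligible compared to the diameter of the cover'' would actively destroy the spectral estimate the argument depends on.

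Second, your justification of the Hilbert-space embeddability of $\Box_{\Gamma(N_i)}F_3$ is wrong. The quotient $F_3/\Gamma(N_i)$ is \emph{not} a central extension of $F_3/N_i$: while $N_i/\Gamma(N_i)$ is normal in $F_3/\Gamma(N_i)$, there is no reason for $[F_3,N_i]$ to lie in $N_i^q[N_i,N_i]$, and for a nonabelian free group it generically does not. The paper instead uses the wall-space/covering argument of Arzhantseva--Guentner--\v{S}pakula and its generalisation in \cite{Kh13}, which exploits the combinatorics of spanning trees and the growing girth of the base graphs, not any central-extension structure. This is the correct input and it is needed verbatim for the final step.

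Third, and most importantly, you have the ``main obstacle'' misplaced. \cref{NoExp} is a clean covering argument; the genuinely hard part of the paper is proving generalized expansion. You cannot simply ``lift the Poincar\'e inequality weighted by the uniform measure on fibres'': the generalized expander criterion (\cref{genExp}) needs a uniform spectral gap for all eigenvectors of $\Qo{n_i}{k_i}$ that are not lifts from the previous horizontal quotient $\Qo{n_i-1}{k_i}$. Establishing that gap is where the quaternion-algebra/Lubotzky construction (so that $F_3/N_n\cong\PSL{2}{q^n}$ is Ramanujan), the closed-geodesic count (\cref{boundLoops}, via Davidoff--Sarnak--Valette), and the representation-theoretic lower bound on multiplicities (\cref{boundRep}) all enter. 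Without these, there is no bound on the offending eigenvalues and the Poincar\'e inequality for the intersected quotients is unsubstantiated. Your proposal acknowledges none of this quantitative content, which is the core of the paper.
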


The proof of the theorem involves taking a sequence of subgroups which gives rise to an embeddable box space, and intersecting it with one which gives rise to an expander, in a controlled way. 

\begin{acknow}
We warmly thank Alain Valette for his many useful suggestions, and for lending us his personal copy of \cite{RamGr}. 
We would like to thank Romain Tessera for his insightful comments leading to the streamlining of several proofs in the paper.
We also thank Alex Lubotzky for a stimulating discussion, and Goulnara Arzhantseva and Tianyi Zheng for their helpful remarks. Finally, we thank the referee for their thorough reading of the paper which led to many improvements in the exposition. This work was supported in part by the Swiss NSF grant no. 200021 163417.
\end{acknow}

\subsection*{Overview} 

The overall structure of the proof is as follows.
We construct a sequence of subgroups $\{N_i\}$ of $F_3$ which gives rise to expanders (Section \ref{Quat}), and consider the sequence of homology covers of the quotients $\{F_3/N_i\}$; this gives rise to another sequence of subgroups $\Theta(N_i)< N_i$ of $F_3$ (Section \ref{HomCovers}) such that the corresponding quotients of $F_3$ coarsely embed into a Hilbert space. We then consider the quotients of $F_3$ by intersections of these sequences of subgroups, as in the following diagram, where the arrows represent quotient maps.
$$\xymatrix{
&  &  & F_3/\Theta(N_3)\ar[d]\cdots \\
&  & F_3/\Theta(N_2)\ar[d] & \Qo{3}{2}\ar[d]\ar[l]\cdots \\
& F_3/\Theta(N_1)\ar[d] & \Qo{2}{1}\ar[d]\ar[l] & \Qo{3}{1}\ar[d]\ar[l]\cdots \\
\{1\} & F_3/N_1\ar[l] & F_3/N_2\ar[l] & F_3/N_3\ar[l]\cdots \\
}$$
In Section \ref{Voila}, we choose a subsequence of quotients $\{F_3/ (N_{n_i}\cap \Theta(N_{k_i}))\}$ which lie on some path that moves sufficiently slowly away the horizontal (expander) sequence in this ``triangle'' of intersections. 

We do this so that for such a quotient $F_3/ (N_{n_i}\cap \Theta(N_{k_i}))$, we can control the eigenvalues corresponding to those eigenvectors of the Laplacian which are not coming from lifts of eigenvectors of the Laplacian on the quotient $F_3/ (N_{n_i -1}\cap \Theta(N_{k_i}))$ which is horizontally to the left of $F_3/ (N_{n_i}\cap \Theta(N_{k_i}))$ (we do this using representation theory in Section \ref{RepSec}). This ensures, via the results on generalized expanders of Section \ref{GenExpSec} that the chosen sequence will not coarsely embed into a Hilbert space.

On the other hand, each of the quotients $F_3/ (N_{n_i}\cap \Theta(N_{k_i}))$ surjects onto $F_3/ \Theta(N_{k_i})$, and we prove in Section \ref{ExpCov} that such a sequence then cannot contain weakly embedded expanders.\\

\section{Expanders and embeddability into Hilbert spaces}\label{Beginning}
\subsection{Expanders and generalized expanders}\label{GenExpSec}

Let $X=(E,V)$ be a finite, $k$-regular graph, and number the vertices of $X$, $V=\{v_1,v_2,...,v_n\}$. The \emph{adjacency matrix} of $X$ is the matrix $A$ indexed by pairs of vertices $v_i,v_j \in V$ such that $A_{ij}$ is equal to the number of edges connecting $v_i$ to $v_j$. We will restrict ourselves to considering simple graphs, and so for us, this number will always be equal to either 0 or 1. 

The \emph{Laplacian} is defined as the matrix $\Delta:= k\Id - A$, which can be viewed as an operator $\ell^2(V)\rightarrow \ell^2(V)$ (here, $\ell^2$ will always denote the real Hilbert space). If $|V|=n$, then $\Delta$ is an $n\times n$ symmetric matrix and thus, counting multiplicities, has $n$ real eigenvalues,
$$\lambda_0=0 \leq \lambda_1 \leq \cdots \leq \lambda_{n-1}.$$
Note that the corresponding eigenvectors are orthogonal.
The first non-trivial eigenvalue $\lambda_1$ is linked to connectivity properties of the graph $X$, namely via the \emph{Cheeger constant} $h(X):= \inf |\partial F|/|F|$, where the infimum is taken over all subsets $F$ of $X$ satisfying $0<|F|\leq |X|/2$. 
The well-known \emph{Cheeger-Buser inequality} links the first non-trivial eigenvalue of the Laplacian with the Cheeger constant: $\frac{\lambda_1}{2}\leq h(X) \leq \sqrt{2k\lambda_1}$.

While any finite connected graph $X$ has a non-zero Cheeger constant, it is rather difficult to construct a sequence of $k$-regular graphs of growing size such that their Cheeger constants are bounded uniformly away from zero. Given a sequence of $k$-regular graphs $\{X_n\}$ with $|X_n|\rightarrow \infty$, we say that $\{X_n\}$ is an \emph{expander sequence} if there exists an $\varepsilon>0$ such that $h(X_n)>\varepsilon$ for all $n$. 
We now give three characterizations of expanders.

\begin{theorem}
Let $(\mathcal{G}_n)_n$ be a sequence of $k$-regular Cayley graphs. This sequence is an expander if and only if one of the following equivalent statements is true:
\begin{enumerate}
\item\label{1} There exists a $c>0$ such that $h(\mathcal{G}_n)\ge c$ for every $n$.
\item\label{2} There exists an $\varepsilon>0$ such that $\lambda_1(\mathcal{G}_n)\ge\varepsilon$.
\item\label{3} There exists a $C$ such that for every $n$ and every $1$-Lipschitz map $\varphi\colon\mathcal{G}_n\to\ell^2$ we have
\[\sum_{x,y\in\mathcal{G}_n}\norm{\varphi(x)-\varphi(y)}^2\le C|\mathcal{G}_n|^2.\]
\end{enumerate}
\end{theorem}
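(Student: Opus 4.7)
The plan is to prove the chain of equivalences (1) $\iff$ (2) $\iff$ (3). The first equivalence (1) $\iff$ (2) is just the Cheeger-Buser inequality already stated in the paper, so I would invoke it directly; the content of the theorem lies in the Poincar\'e-type characterization (2) $\iff$ (3).

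For (2) $\Rightarrow$ (3), I work coordinate-wise. Writing $\varphi = (\varphi_i)_i$, the classical variational inequality $\lambda_1 \norm{\varphi_i - \bar\varphi_i}_2^2 \le \sum_{\{x,y\}\text{ edge}}(\varphi_i(x) - \varphi_i(y))^2$ holds for each coordinate. Summing over $i$ and using that $\varphi$ is $1$-Lipschitz (so the total edge sum is at most $|E| = k|\mathcal{G}_n|/2$), together with the identity $\sum_{x,y}\norm{\varphi(x) - \varphi(y)}^2 = 2|\mathcal{G}_n|\sum_i \norm{\varphi_i - \bar\varphi_i}_2^2$, produces (3) with $C = k/\varepsilon$.

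For (3) $\Rightarrow$ (2) a single-eigenvector test function is too weak: the Lipschitz constant of an eigenvector need not be controlled by its $L^2$-norm uniformly. The plan is to exploit the full $\lambda_1$-eigenspace. Picking an orthonormal basis $f_1,\dots,f_d$ of the eigenspace (which is invariant under the regular representation of the group, thanks to the Cayley graph structure), the map $\varphi(x) = (f_i(x))_i \in \mathbb{R}^d$ has the key property that $\norm{\varphi(x)-\varphi(y)}^2$ depends only on $x^{-1}y$; in particular $\norm{\varphi(e) - \varphi(s)}^2$ is the common value on every $s$-edge. The edge-sum identity $\sum_{\text{edge}}\norm{\varphi(x)-\varphi(y)}^2 = d\lambda_1$ combined with translation invariance gives $\sum_{s \in S}\norm{\varphi(e)-\varphi(s)}^2 = 2d\lambda_1/|\mathcal{G}_n|$, so the Lipschitz constant satisfies $L^2 \le 2d\lambda_1/|\mathcal{G}_n|$. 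Applying (3) to the $1$-Lipschitz map $\varphi/L$, and using $\sum_{x,y}\norm{\varphi(x)-\varphi(y)}^2 = 2|\mathcal{G}_n|d$ (each $f_i$ is orthogonal to constants), yields $L^2 \ge 2d/(C|\mathcal{G}_n|)$. Combining the two bounds on $L^2$ gives $\lambda_1 \ge 1/C$.

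The main obstacle is this last direction. Without the symmetry of the Cayley graph, the maximum of $\norm{\varphi(x) - \varphi(y)}^2$ over edges could be much larger than its edge average, and the crucial upper bound $L^2 \le 2d\lambda_1/|\mathcal{G}_n|$ would fail. Exploiting both the group structure and the full $\lambda_1$-eigenspace (rather than just one eigenvector) is what turns the edge-sum identity into a useable Lipschitz bound and closes the loop with (3).
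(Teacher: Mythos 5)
Your argument is correct, and the direction $(2)\Rightarrow(3)$ is essentially the paper's proof phrased coordinate-wise. The interesting divergence is in $(3)\Rightarrow(2)$. You build a map $\varphi(x)=(f_1(x),\dots,f_d(x))\in\R^d$ out of an orthonormal basis of the \emph{entire} $\lambda_1$-eigenspace, and use the fact that this eigenspace is invariant under the left regular representation to make $\norm{\varphi(x)-\varphi(y)}$ depend only on $x^{-1}y$, so the Lipschitz constant is controlled by the translation-invariant edge sum. The paper instead takes a \emph{single} eigenvector $f$ and smears it by the group action: $\varphi(x)\in\ell^2(\mathcal{G}_n)$ is defined by $\varphi(x)(z)=f(z^{-1}x)/\sqrt{B}$ with $B=\sum_{d(x,y)=1}|f(x)-f(y)|^2$; this produces exactly the same kind of translation-invariant edge-difference function, so $\varphi$ is $1$-Lipschitz by construction, and then $\sum_{x,y}\norm{\varphi(x)-\varphi(y)}^2$ is computed directly. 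Both routes crucially exploit the Cayley-graph symmetry to convert the eigenvalue bound into a uniform (rather than merely averaged) Lipschitz estimate, which is exactly the obstacle you correctly identified. Your version is slightly more economical in that the eigenspace dimension $d$ cancels cleanly to give $\lambda_1\ge 1/C$ directly, whereas the paper argues by contradiction from $\lambda_1<\varepsilon=1/C$; the paper's version avoids ever decomposing the eigenspace and works in the canonical ambient Hilbert space $\ell^2(\mathcal{G}_n)$. Either way the Cayley-graph hypothesis is doing the same work. The proof is valid as written, modulo the small remark that one should note $L>0$ (true since $f_1$ is nonconstant) before dividing by it.
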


\begin{proof}
The equivalence $\ref{1}\Leftrightarrow\ref{2}$ is due to the Cheeger-Buser inequality.

The proof of $\ref{2}\Rightarrow\ref{3}$ is based on Proposition 5.7.2 of \cite{LSG}.

Set $C=\frac{k}{\varepsilon}$. Now for any $n$ we can take $v_0=1_{\mathcal{G}_n},v_1,\ldots,v_{|\mathcal{G}_n|-1}$ to be the eigenvectors of the Laplacian $\Delta_n$ on $\mathcal{G}_n$.

Let $f\colon\mathcal{G}_n\to\R$ such that $\displaystyle\sum_{x\in\mathcal{G}_n}f(x)=0$, we can write $f = a_1v_1+\ldots+a_{|\mathcal{G}_n|-1}v_{|\mathcal{G}_n|-1}$.
We have
\begin{eqnarray*}
\sum_{d(x,y)=1}|f(x)-f(y)|^2 & = & \sum_{d(x,y)=1} f(x)(f(x)-f(y)) - f(y)(f(x)-f(y))\\
& = & \sum_{d(x,y)=1} f(x)(f(x)-f(y)) + f(x)(f(x)-f(y))\\
& = & \sum_{d(x,y)=1} 2f(x)(f(x)-f(y))\\
& = & \sum_{x\in\mathcal{G}_n} 2f(x)(\Delta_n(f)(x))\\
& = & 2\langle f, \Delta_n(f)\rangle
\end{eqnarray*}

Using that $\langle v_i,v_j\rangle=0$ if $i\neq j$ we thus have:
\begin{eqnarray*}
\sum_{d(x,y)=1}|f(x)-f(y)|^2 & = & 2\left\langle a_1v_1 + \ldots + a_{|\mathcal{G}_n|-1}v_{|\mathcal{G}_n|-1}, a_1\lambda_1v_1 + \ldots + a_{|\mathcal{G}_n|-1}\lambda_{|\mathcal{G}_n|-1}v_{|\mathcal{G}_n|-1}\right\rangle\\
& = & 2\left(\lambda_1\norm{a_1v_1}^2 + \ldots + \lambda_{|\mathcal{G}_n|-1}\norm{a_{|\mathcal{G}_n|-1}v_{|\mathcal{G}_n|-1}}^2\right)\\
& \ge & 2\lambda_1\norm{f}^2\\
& = & 2\lambda_1(\mathcal{G}_n) \sum_{x\in\mathcal{G}_n}|f(x)|^2.
\end{eqnarray*}
Now let $\varphi\colon \mathcal{G}_n\to\ell^2$ be a $1$-Lipschitz map. Without loss of generality we may assume that $\displaystyle\sum_{x\in\mathcal{G}_n}\varphi(x) = 0$. We can decompose $\varphi$ according to an orthonormal basis. Using this decomposition and the computation above, we find that $\displaystyle 2\lambda_1(\mathcal{G}_n) \sum_{x\in\mathcal{G}_n}\norm{\varphi(x)}^2 \le \sum_{d(x,y)=1}\norm{\varphi(x)-\varphi(y)}^2 \le \sum_{d(x,y)=1} 1 \le k|\mathcal{G}_n|$.

Now we can bound $\displaystyle\sum_{x,y\in\mathcal{G}_n}\norm{\varphi(x)-\varphi(y)}^2$ as follows:
\begin{eqnarray*}
\sum_{x,y\in\mathcal{G}_n} \norm{\varphi(x)-\varphi(y)}^2 & = & \sum_{x,y\in\mathcal{G}_n} \norm{\varphi(x)}^2 + \norm{\varphi(y)}^2 - 2\langle\varphi(x),\varphi(y)\rangle\\
& = & \sum_{x\in\mathcal{G}_n} 2|\mathcal{G}_n|\norm{\varphi(x)}^2 - 2\left\langle\sum_{x\in\mathcal{G}_n}\varphi(x),\sum_{y\in\mathcal{G}_n}\varphi(y)\right\rangle\\
& \le & \frac{k|\mathcal{G}_n|}{\lambda_1(\mathcal{G}_n)}|\mathcal{G}_n|\\
& \le & C|\mathcal{G}_n|^2.\\
\end{eqnarray*}
This proves that $\ref{2}\Rightarrow\ref{3}$.

Now we only have to prove that $\ref{3}\Rightarrow\ref{2}$. Set $\varepsilon=\frac{1}{C}$ and suppose that $\lambda_1(\mathcal{G}_n)<\varepsilon$ for some $n$. Let $f$ be the eigenvector $v_1$ for this $n$. Set $B=\displaystyle\sum_{d(x,y)=1} |f(x)-f(y)|^2$. Now we can take $\varphi\colon \mathcal{G}_n\to\ell^2(\mathcal{G}_n)$ with $\varphi(x)\colon\mathcal{G}_n\to\R\colon y\to \frac{1}{\sqrt{B}}f(y^{-1}x)$. Note that $\mathcal{G}_n$ is a Cayley graph, therefore $y^{-1}x$ is well-defined.
Now $\varphi$ is $1$-Lipschitz because for every $x,y\in\mathcal{G}_n$ with $d(x,y)=1$ we have 
$$\norm{\varphi(x)-\varphi(y)}^2\le \displaystyle\frac{1}{B}\sum_{z\in\mathcal{G}_n} |f(z^{-1}x)-f(z^{-1}y)|^2 \le \frac{1}{B}\sum_{d(x',y')=1} |f(x')-f(y')|^2=1.$$
Showing that $2\varepsilon\displaystyle\sum_{x\in\mathcal{G}_n} |f(x)|^2> B$ would show that $\varphi$ does not satisfy $\displaystyle\sum_{x,y\in\mathcal{G}_n}\norm{\varphi(x)-\varphi(y)}^2\le C|\mathcal{G}_n|^2$, because of the following argument:
\begin{align*}
\sum_{x,y\in\mathcal{G}_n} \norm{\varphi(x)-\varphi(y)}^2 & =  \sum_{x,y\in\mathcal{G}_n} \norm{\varphi(x)}^2 + \norm{\varphi(y)}^2 - 2\langle\varphi(x),\varphi(y)\rangle\\
& =  \sum_{x\in\mathcal{G}_n} 2|\mathcal{G}_n|\norm{\varphi(x)}^2 - 2\left\langle\sum_{x\in\mathcal{G}_n}\varphi(x),\sum_{y\in\mathcal{G}_n}\varphi(y)\right\rangle.
\end{align*}
But we have the following computation
$$\left(\sum_{x\in \mathcal{G}_n}\varphi(x) \right)(z) = \sum_{x\in \mathcal{G}_n} \frac{1}{\sqrt{B}}f(z^{-1}x)= \sum_{y\in \mathcal{G}_n} \frac{1}{\sqrt{B}}f(y)= \frac{1}{\sqrt{B}}\left\langle f, 1_{\mathcal{G}_n} \right\rangle = 0$$
so that we have $$\left\langle\sum_{x\in\mathcal{G}_n}\varphi(x),\sum_{y\in\mathcal{G}_n}\varphi(y)\right\rangle = 0$$ and thus
\begin{align*}
\sum_{x,y\in\mathcal{G}_n} \norm{\varphi(x)-\varphi(y)}^2
& = \sum_{x\in\mathcal{G}_n} 2|\mathcal{G}_n|\norm{\varphi(x)}^2\\
& =  \frac{2}{B}|\mathcal{G}_n|\sum_{x,y\in\mathcal{G}_n} |f(y^{-1}x)|^2\\
& =  \frac{2}{B}|\mathcal{G}_n|^2\sum_{y\in\mathcal{G}_n} |f(y)|^2\\
& \ge  C|\mathcal{G}_n|^2.\\
\end{align*}
To show that $2\varepsilon\displaystyle\sum_{x\in\mathcal{G}_n} |f(x)|^2>B$, we use the first equality obtained at the beginning of the proof of $\ref{2}\Rightarrow\ref{3}$ to make the following computations:
\begin{eqnarray*}
B & = & \sum_{d(x,y)=1} |f(x)-f(y)|^2\\
& = & 2\sum_{x\in\mathcal{G}_n} f(x)(\Delta_n(f)(x))\\
& = & 2\sum_{x\in\mathcal{G}_n} \lambda_1(\mathcal{G}_n)f(x)f(x)\\
& = & 2\lambda_1(\mathcal{G}_n)\sum_{x\in\mathcal{G}_n} |f(x)|^2\\
& < & 2\varepsilon\sum_{x\in\mathcal{G}_n} |f(x)|^2.\\
\end{eqnarray*}
This concludes the proof.
\end{proof}

The following definition of Tessera \cite{Tes} was introduced in order to characterize the failure to coarsely embed into a Hilbert space.

\begin{definition}[Definition 1, \cite{Tes}]
Let $(\mathcal{G}_n)_n$ be a sequence of graphs with $d_n$ the induced graph metric on $\mathcal{G}_n$. This sequence is said to be a generalized expander if there exists a sequence $r_n$ with $r_n\to\infty$ as $n\to\infty$, a sequence of probability measures $\mu_n$ on $\mathcal{G}_n\times\mathcal{G}_n$ supported on $\{(x,y) \in \mathcal{G}_n\times\mathcal{G}_n : d_n(x,y)\geq r\}$  and a constant $C>0$ such that for every $1$-Lipschitz map $\varphi\colon (\mathcal{G}_n)_n\to\ell^2$ we have the following condition:
\[\sum_{x,y\in\mathcal{G}_n}\norm{\varphi(x)-\varphi(y)}^2\mu_n(x,y) \le C.\]
\end{definition}

In particular, expanders in the usual sense are generalized expanders. 
It is proved in \cite{Tes} that a metric space does not embed coarsely into a Hilbert space if and only if it contains a coarsely embedded sequence of generalized expanders.
In \cite{AT}, Arzhantseva and Tessera define the notions of expansion relative to subgroups, partitions, and measures, to differentiate between different cases of generalized expansion, and give examples of box spaces which do not coarsely embed into a Hilbert space and do not contain coarsely (and even weakly) embedded expanders.

We now give a natural way to find generalized expanders, which coincides with the special case of expansion relative to subgroups. For this, we will need the notion of \emph{lifts of eigenvectors} of the Laplacian. Given a finite group $G$ and its Cayley graph with respect to some generating set $S$, a quotient $H$ of $G$ considered with the induced metric, and an eigenvector $v\in \ell^2(H)$ of the Laplacian on $H$, its lift to $\ell^2(G)$ is the vector $\widetilde{v}$ defined by $\widetilde{v}(g)=v(\pi(g))$, where $\pi$ is the quotient map $G \rightarrow H$. It is easy to check that the lift $\widetilde{v}$ is an eigenvector of the Laplacian on $G$ if the degree of the Cayley graphs of $G$ and $H$ is the same (i.e. if the ball of radius 1 in $H$ and its lift in $G$ are the same).

\begin{proposition}\label{genExp}
Let $r_n$ be a sequence such that $r_n\to\infty$ as $n\to\infty$.
Let $G_n$ be a sequence of finite $k$-generated groups with their corresponding Cayley graphs, and let $H_n$ be a sequence of quotient groups of $G_n$ with the induced metrics such that the kernel $N_n$ of $G_n\to H_n$ is non-trivial, but $B_{G_n}(e,r_n)\cap N_n = \{e\}$.

If there exists a constant $\varepsilon>0$ such that for every eigenvector of the Laplacian $\Delta_n$ on $G_n$ that is not the lift of an eigenvector of the Laplacian of $H_n$, the corresponding eigenvalue is bigger than $\varepsilon$. Then the Cayley graphs of $G_n$ form a generalized expander.
\end{proposition}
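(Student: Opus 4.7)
The plan is to define $\mu_n$ as the uniform probability measure on pairs $(x,y)$ differing by a nontrivial element of $N_n$, and then to verify the generalized-expander inequality by orthogonally splitting off the component of $\varphi$ that lifts from $H_n$.

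Concretely, first let $\mu_n$ be the uniform probability measure on the set $\{(x,y)\in G_n\times G_n : x^{-1}y\in N_n\setminus\{e\}\}$, which has cardinality $|G_n|(|N_n|-1)$. By the hypothesis $B_{G_n}(e,r_n)\cap N_n=\{e\}$, every pair in the support of $\mu_n$ lies at distance at least $r_n$, supplying the required sequence $r_n\to\infty$. For a $1$-Lipschitz map $\varphi\colon G_n\to\ell^2$, I would decompose $\ell^2(G_n)=V\oplus V^\perp$, where $V$ is the subspace of functions constant on $N_n$-cosets (equivalently, lifts from $\ell^2(H_n)$). Since $N_n$ is normal in $G_n$, the Laplacian $\Delta_n$ preserves both $V$ and $V^\perp$. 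Applying this splitting coordinate-wise to $\varphi$ yields an orthogonal decomposition $\varphi=\varphi_1+\varphi_2$; the key feature is that $\varphi_1(x)=\varphi_1(xn)$ for every $n\in N_n$, so $\varphi(x)-\varphi(xn)=\varphi_2(x)-\varphi_2(xn)$.

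The main quantitative step is to control $\sum_x\norm{\varphi_2(x)}^2$ via the spectral gap. By hypothesis, every eigenvalue of $\Delta_n$ on $V^\perp$ exceeds $\varepsilon$, so
\[
\varepsilon\sum_x\norm{\varphi_2(x)}^2 \le \langle\varphi_2,\Delta_n\varphi_2\rangle = \tfrac{1}{2}\sum_{d(x,y)=1}\norm{\varphi_2(x)-\varphi_2(y)}^2 \le \tfrac{1}{2}\sum_{d(x,y)=1}\norm{\varphi(x)-\varphi(y)}^2 \le \tfrac{k|G_n|}{2},
\]
where the penultimate inequality uses the $\Delta_n$-invariance of the decomposition together with $\Delta_n\ge 0$ on $V$, and the last uses the $1$-Lipschitz condition. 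Hence $\sum_x\norm{\varphi_2(x)}^2\le k|G_n|/(2\varepsilon)$.

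Expanding $\norm{\varphi_2(x)-\varphi_2(xn)}^2\le 2\bigl(\norm{\varphi_2(x)}^2+\norm{\varphi_2(xn)}^2\bigr)$ and using translation-invariance of $\sum_x\norm{\varphi_2(x)}^2$ gives
\[
\sum_{x,y}\norm{\varphi(x)-\varphi(y)}^2\mu_n(x,y) = \frac{1}{|G_n|(|N_n|-1)}\sum_{x}\sum_{n\in N_n\setminus\{e\}}\norm{\varphi_2(x)-\varphi_2(xn)}^2 \le \frac{4}{|G_n|}\sum_x\norm{\varphi_2(x)}^2 \le \frac{2k}{\varepsilon},
\]
so one may take $C=2k/\varepsilon$. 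The heart of the argument is that the lift part $\varphi_1$ is annihilated when one takes differences along $N_n$-cosets, reducing the problem to a standard spectral-gap Poincar\'{e} inequality on $V^\perp$; the only technical subtlety is passing from scalar to $\ell^2$-valued functions, which is handled by applying the scalar argument coordinate-wise.
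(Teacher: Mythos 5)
Your proof is correct and follows essentially the same strategy as the paper's: same measure $\mu_n$ supported on pairs differing by a nontrivial element of $N_n$, same orthogonal decomposition of $\ell^2(G_n)$ into lifts from $H_n$ and their complement, and the same use of the spectral gap on the complement; you even arrive at the same constant $C = 2k/\varepsilon$. The paper spells out the decomposition explicitly via an eigenvector basis and computes the $V$-projection as $x \mapsto \frac{1}{|N_n|}\sum_{z\in N_n}f(xz)$, then expands $\norm{\varphi(x)-\varphi(y)}^2$ via the polarization identity; your version replaces these explicit manipulations with the observation that $\Delta_n$ preserves $V$ and $V^\perp$ (so $\langle\varphi_2,\Delta_n\varphi_2\rangle\le\langle\varphi,\Delta_n\varphi\rangle$ by positivity on $V$) and with the cruder but sufficient bound $\norm{a-b}^2\le 2\norm{a}^2+2\norm{b}^2$, which is a cleaner packaging of the identical idea.
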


\begin{proof}
Take $D=|G_n|(|N_n|-1)$ and take $\mu$ such that $\mu(x,y)$ is equal to $\frac{1}{D}$ if $x^{-1}y$ lies in $N_n\setminus\{e\}$ and $0$ otherwise. Take $C = \frac{2k}{\varepsilon}$.


Now for any $n$ we can take $M_{N_n}$ to be the averaging operator, i.e. $M_{N_n}(f)(x) = \frac{1}{|N_n|}\displaystyle\sum_{g\in N_n} f(gx)$. 
The space generated by the lifts of eigenvectors of the Laplacian of $H_n$ is equal to the image of $M_{N_n}$, so the space generated by all other eigenvectors is the image of $\Id-M_{N_n}$. These eigenvectors correspond to eigenvalues bigger than $\varepsilon$, so $\Delta_n(1-M_N) \ge \varepsilon (1-M_N)$, where $\Delta_n$ denotes the Laplacian on $G_n$.
Note that as $N_n$ is a normal subgroup we have for every $s\in S$, every $f\in \ell^2(G_n)$ and every $x\in G_n$, writing $\lambda_s$ for the left regular representation of $s$, that $\displaystyle (\lambda_s\circ M_{N_n})(f)(x) = M_{N_n}(f)(s^{-1}x) = \frac{1}{|N_n|}\sum_{g\in N_n} f(gs^{-1}x) = \frac{1}{|N_n|}\sum_{h\in N_n} f(s^{-1}hx) = (M_{N_n}\circ \lambda_s)(f)(x)$. So as $\displaystyle\Delta_n = k\Id - \sum_{s\in S}\lambda_s$ we have that $M_{N_n}$ commutes with $\Delta_n$.

As $M_{N_n}$ is an orthogonal projection and the Laplacian $\Delta_n$ is a positive self-adjoint operator, we can conclude that $\Delta_n \ge  (1-M_N) \Delta_n =  \Delta_n(1-M_N) \ge \varepsilon (1-M_N)$.

So we can make the following computation:
\begin{eqnarray*}
\sum_{d(x,y)=1}|f(x)-f(y)|^2 & = & \sum_{d(x,y)=1} f(x)(f(x)-f(y)) - f(y)(f(x)-f(y))\\
& = & \sum_{d(x,y)=1} 2f(x)(f(x)-f(y))\\
& = & \sum_{x\in G_n} 2f(x)(\Delta_n(f)(x))\\
& = & 2\langle f, \Delta_n(f)\rangle\\
& \ge & 2\varepsilon\langle f, (\Id - M_{N_n}) f\rangle\\
& = & 2\varepsilon \sum_{x\in G_n} f(x) \left( f(x) - \frac{1}{|N_n|} \sum_{z\in N_n}f(xz)\right)\\
\end{eqnarray*}

Now let $\varphi\colon \mathcal{G}_n\to\ell^2$ be a $1$-Lipschitz map. We can decompose $\varphi$ according to an orthonormal basis. 
Using this decomposition we find the following:
\[\displaystyle 2\varepsilon \sum_{x\in G_n}\left\langle\varphi(x),\varphi(x)-\frac{1}{|N_n|}\sum_{z\in N_n}\varphi(xz)\right\rangle \le \sum_{d(x,y)=1}\|\varphi(x)-\varphi(y)\|^2 \le \sum_{d(x,y)=1} 1 \le k|G_n|.\]
Now we can bound $\displaystyle\sum_{x,y\in G_n}\|\varphi(x)-\varphi(y)\|^2$ as follows:
\begin{eqnarray*}
\sum_{x,y\in G_n} \|\varphi(x)-\varphi(y)\|^2\mu(x,y) & = & 2\sum_{x,y\in G_n} \left(\|\varphi(x)\|^2 - \langle\varphi(x),\varphi(y)\rangle\right)\mu(x,y)\\
& = & \frac{2}{D}\sum_{x\in G_n}\sum_{z\in N_n\setminus\{e\}} \left(\|\varphi(x)\|^2 - \langle\varphi(x),\varphi(xz)\rangle\right)\\
& = & \frac{2}{D}\sum_{x\in G_n}\sum_{z\in N_n} \left(\|\varphi(x)\|^2 - \langle\varphi(x),\varphi(xz)\rangle\right)\\
& \le & \frac{2}{D}\sum_{x\in G_n} \left(|N_n|\|\varphi(x)\|^2 - \sum_{z\in N_n}\langle\varphi(x),\varphi(xz)\rangle\right)\\
& \le & \frac{2|N_n|}{D}\sum_{x\in G_n} \left( \|\varphi(x)\|^2 - \left\langle\varphi(x), \frac{1}{|N_n|}\sum_{z\in N_n}\varphi(xz)\right\rangle \right)\\
& \le &  \frac{2|N_n|}{D}\cdot\frac{k|G_n|}{2\varepsilon}\\
& \le &  \frac{2k}{\varepsilon}\\
& = &  C.\\
\end{eqnarray*}
Note that the second-to-last inequality follows from the inequality $\frac{|N_n| \cdot |G_n|}{D} \leq \frac{|N_n|}{|N_n|-1} \leq 2$.
Therefore we can conclude that $G_n$ is a generalized expander.
\end{proof}

\subsection{Expanders and finite covers}\label{ExpCov}

Proposition 2 of \cite{AT} states that given a sequence of short exact sequences of finite groups $\{N_n \rightarrow G_n \rightarrow Q_n\}_n$ such that the quotient groups $\{Q_n\}$ and the subgroups $\{N_n\}$ coarsely embed into a Hilbert space (with respect to the induced metrics from $\{G_n\}$), the sequence $\{G_n\}$ cannot contain weakly embedded expanders.

We now show that the assumption on the subgroups is satisfied if the sequences $\{Q_n\}$ and $\{G_n\}$ both approximate the same group, i.e. if they are both box spaces of the same infinite group.

\begin{proposition}\label{NoExp}
Let $G$ be a finitely generated, residually finite group with a filtration $\{N_n\}$, and let $\{M_n\}$ be another sequence of finite index normal subgroups of $G$ such that $N_n > M_n$ for all $n$. If $\Box_{(N_n)}G$ coarsely embeds into a Hilbert space, then $\Box_{(M_n)}G$ does not contain weakly (and thus coarsely) embedded expanders. 
\end{proposition}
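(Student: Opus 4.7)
The plan is to argue by contradiction. Suppose an expander sequence $\{E_n\}$ is coarsely embedded into $\Box_{(M_i)}G$ via some $f$ with controls $\rho_{\pm}$. Since distinct components of $\Box_{(M_i)}G$ are at distance at least the larger of their diameters while $\operatorname{diam}(E_n)=O(\log|E_n|)$, we may assume for $n$ large that $f|_{E_n}=:f_n$ has image in a single quotient $G/M_{i_n}$; since $|E_n|\to\infty$ and $\{M_i\}$ is a filtration, $i_n\to\infty$. Let $g\colon\Box_{(N_i)}G\to \ell^2$ be the given coarse embedding with controls $\sigma_{\pm}$. The goal is to build from $f_n$, the projection $\pi_{i_n}\colon G/M_{i_n}\to G/N_{i_n}$, and $g$ a coarse embedding of the expander sequence $\{E_n\}$ into a Hilbert space, which is impossible by the third characterization of expanders in the theorem above.

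The key geometric ingredient is the following: write $s_i$ for the length in $G$ of the shortest nontrivial element of $N_i$, equivalently the girth of the Cayley graph of $G/N_i$. The projection $\pi_i\colon G/M_i\to G/N_i$ is an isometry on every pair $(u,v)$ satisfying $d_{G/M_i}(u,v)<s_i/2$. Indeed, letting $w'\in G$ be a shortest representative of the coset $uv^{-1}M_i$ and $w\in G$ one of $uv^{-1}N_i\supseteq uv^{-1}M_i$, the product $w(w')^{-1}$ lies in the normal subgroup $N_i$ and has $G$-length at most $|w|+|w'|\le 2\,d_{G/M_i}(u,v)<s_i$, hence must be trivial; so $d_{G/N_i}(\pi_i u,\pi_i v)=d_{G/M_i}(u,v)$. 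Since $\{N_i\}$ is a filtration, $s_i\to\infty$.

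The strategy is therefore to arrange that $f_n(E_n)$ has $G/M_{i_n}$-diameter less than $s_{i_n}/2$, equivalently $s_{i_n}>2\rho_+(\operatorname{diam}(E_n))$. Since $s_i\to\infty$, this can be achieved by passing to a subsequence---either directly when $s_{i_n}$ outpaces $\rho_+(\operatorname{diam}(E_n))$, or in general by replacing $f_n$ by a lift to a finer quotient $G/M_{j_n}$, using that the natural covering $G/M_{j_n}\to G/M_{i_n}$ is a local isometry on balls of radius less than $r_{j_n}/2$ (where $r_{j_n}$ denotes the girth of $G/M_{j_n}$) and that $f_n(E_n)$ lies in such a ball once $j_n$ is sufficiently large. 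Once arranged, $\pi_{i_n}\circ f_n\colon E_n\to G/N_{i_n}$ is itself a coarse embedding, with the same controls $\rho_{\pm}$; composing with $g$ then produces coarse embeddings $\psi_n\colon E_n\to\ell^2$ with uniform controls $\sigma_{\pm}\circ\rho_{\pm}$.

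To conclude, rescale $\psi_n$ to be $1$-Lipschitz; the third characterization of expanders yields $\sum_{x,y\in E_n}\|\psi_n(x)-\psi_n(y)\|^2\le C|E_n|^2$ for a constant $C$ independent of $n$. On the other hand, a standard ball-counting argument in the $k$-regular graph $E_n$ shows that at least half of all pairs $(x,y)$ lie at distance $\ge\tfrac{1}{2}\log_k|E_n|$, so the lower control on $\psi_n$ forces the same sum to be at least of order $\sigma_-\!\bigl(\rho_-(\tfrac{1}{2}\log_k|E_n|)\bigr)^2|E_n|^2$, which tends to infinity---contradicting the upper bound. The main obstacle I anticipate is the subsequence/lifting step: guaranteeing the isometry condition for $\pi_{i_n}$ restricted to $f_n(E_n)$ may require lifting $f_n$ to a finer quotient, and the validity of such a lift rests on the local isometry of the covering $G/M_{j_n}\to G/M_{i_n}$ together with the fact that $f_n(E_n)$ sits inside a ball of controlled radius.
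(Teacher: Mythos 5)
There is a genuine gap in the reduction you propose. Your plan hinges on arranging that the image $f_n(E_n)\subset G/M_{i_n}$ has diameter less than $s_{i_n}/2$, so that $\pi_{i_n}$ restricted to $f_n(E_n)$ is an isometry and hence $\pi_{i_n}\circ f_n$ is a coarse embedding into $G/N_{i_n}$. Neither of your two proposed remedies delivers this. Passing to a subsequence does not help, because the index $i_n$ of the component into which $E_n$ lands is determined by the given embedding $f$; you cannot decouple $s_{i_n}$ from $\operatorname{diam}(E_n)$. Lifting $f_n$ along a covering $G/M_{j_n}\to G/M_{i_n}$ does not help either: such a lift exists as a local isometry only if $f_n(E_n)$ already sits inside a ball of radius less than half the girth of the \emph{base} $G/M_{i_n}$ (not of the cover $G/M_{j_n}$, as you write), which is exactly the control you are trying to establish. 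If $f_n(E_n)$ traverses essential loops of $G/M_{i_n}$ — and it typically will, since its diameter is at least $\rho_-(\operatorname{diam}(E_n))$, generally far exceeding the girth of $G/M_{i_n}$ — then no lift exists at all.

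This is precisely the difficulty the paper's proof addresses, and it requires an additional idea absent from your proposal: a concentration argument. One first composes to get a $1$-Lipschitz map $\psi\circ\pi_i\circ\varphi_i\colon X_i\to\mathcal{H}$ from an expander into Hilbert space, then invokes the expander Poincar\'e inequality in the form of the first half of Proposition 2 of \cite{AT}: there is a subset $A_i\subset X_i$ with $|A_i|\geq|X_i|/2$ whose image under $\pi_i\circ\varphi_i$ lies in a ball of radius $r$ in $G/N_i$, with $r$ depending only on the control function, the Cheeger constant and the degree, not on $i$. Control is obtained only on a large subset $A_i$, not on all of $X_i$, and it comes from expansion, not from any diameter/girth comparison. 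The proof then uses the local-isometry property of $\pi_i$ (the observation you correctly make) to show that $\varphi_i(A_i)$ sits in $G/M_i$ as a union of small, widely separated clusters, and a counting argument rules out a set of density at least $1/2$ in an expander having this shape. So your endgame (deriving a contradiction from the Poincar\'e inequality) is in the right spirit, but it is applied at the wrong stage: the essential use of the Poincar\'e inequality is to obtain concentration first, after which one does not need, and cannot in general get, a coarse embedding of all of $E_n$ into Hilbert space.
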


\begin{proof}
Consider the sequence of short exact sequences $$\{N_n/M_n \rightarrow G/M_n \rightarrow G/N_n\}_n,$$
where $G/M_n$ and $G/N_n$ are considered with the metric induced by the restriction of the respective box space metrics, and $N_n/M_n$ is considered with the metric induced by viewing $N_n/M_n$ as a subspace of $G/M_n$. 

Since both $\Box_{(M_n)}G$ and $\Box_{(N_n)}G$ are box spaces of $G$ and $G/N_n$ is a quotient of $G/M_n$, for all $R$ there is some $m(R)$ such that for all $n\geq m(R)$, the balls of radius $R$ in $G/M_n$ and $G/N_n$ are isometric to balls of radius $R$ in $G$; moreover, the quotient map $\pi_n: G/M_n \rightarrow G/N_n$ is an isometry when restricted to a ball of radius $R$. This means that the ball of radius $R$ in $G/M_n$ does not contain any non-trivial element of $N_n/M_n$, and so we see that the $N_n/M_n$ are \emph{sparse} with respect to the subspace metric, i.e. there exists a sequence $r_n \rightarrow \infty$ such that any two points of $N_n/M_n$ are at distance at least $r_n$ from each other.

We can deduce from this that the sequence $(N_n/M_n)$ coarsely embeds into a Hilbert space: indeed, consider the embedding of each $N_n/M_n$ into $\ell^2(N_n/M_n)$ defined by sending each element $x$ of $N_n/M_n$ to $r_n \chi_x$, that is, the characteristic function of $x$ in $\ell^2(N_n/M_n)$ scaled by $r_n$.

Thus, since we also assume that the box space $\Box_{(N_n)}G$ coarsely embeds into a Hilbert space, this sequence of short exact sequences satisfies the assumptions of Proposition 2 of \cite{AT}, and so the box space $\Box_{(M_n)}G$ does not contain a weakly embedded expander.
\end{proof}

\section{Subgroups of the free group}

To construct box spaces of the free group which do not admit a coarse embedding into a Hilbert space without containing weakly embedded expanders, we will use two sequences of subgroups of the free group: one which gives rise to a box space which is an expander, and one which does admit a coarse embedding into a Hilbert space. We then use information about these two sequences to prove that the box space obtained using certain intersections of these subgroups has the desired properties. In the following two subsections, we will describe the two sequences of subgroups.

\subsection{Constructing subgroups of $F_3$}\label{Quat}
In this section we will define a sequence of nested finite index normal subgroups $N_n$ of the free group $F_3$. We will rely heavily on the machinery described in \cite{Lub} to construct a sequence of Ramanujan graphs, and will frequently refer to relevant results and proofs in \cite{Lub}.

We fix the prime $p=5$, noting that $p\equiv 1 \bmod 4$, and an odd prime $q\neq 5$ such that $-1$ is a quadratic residue modulo $q$ and $5$ is a quadratic residue modulo $2q$. Such a prime exists, for example $q=29$ (in fact, there exist infinitely many such primes, see for example the proof of Theorem 4.6 in \cite{DK}). 

Consider $\Ha(\Z)$, the integer quaternions, with the equivalence relation $a\sim b$ if there exists $m,n\in\mathbb{N}$ such that $\p^n a = \pm \p^m b$. 
Note that the equivalence relation $\sim$ is compatible with multiplication in $\Ha(\Z)$.
Recall that the norm $N$ on $\Ha(\Z)$ is defined by $N(\alpha)= \alpha \bar{\alpha}$, where $\bar{\alpha}$ is the quaternion conjugate to $\alpha$. Abusing the notation, we will also write $\alpha$ for the equivalence class of $\alpha$ with respect to $\sim$. Note that for elements $\alpha\in \Ha(\Z)/\sim$ with $N(\alpha)=\p^m$ for some $m\in\Z$, we have $\alpha^{-1}=\bar{\alpha}$.

\begin{proposition}[Corollary 2.1.11, \cite{Lub}]\label{F2}
The subgroup $\Lambda(2)$ of $\Ha(\Z)/\sim$ generated multiplicatively by the set $S_\p:= \{ 1+ 2i, 1+ 2j, 1+ 2k\}$ is the free group $F_3$ on the set $S_\p$. 
\end{proposition}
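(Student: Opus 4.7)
The plan is to deduce freeness from a unique-factorization theorem for integer quaternions of $\p$-power norm, which is the classical argument underpinning the Lubotzky--Phillips--Sarnak Ramanujan-graph construction.

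First, I would record the elementary identities $N(1+2\xi) = 1 + 4 = 5 = \p$ for $\xi\in\{i,j,k\}$, together with $(1+2\xi)(1-2\xi) = 5 \sim 1$. Hence in $\Ha(\Z)/\sim$ each generator $s\in S_\p$ has inverse $\bar s$, and the collection of ``letters'' available to form words in $\Lambda(2)$ is
\[
\mathcal{L} \;=\; \{1\pm 2i,\; 1\pm 2j,\; 1\pm 2k\},
\]
of cardinality $6$. A non-trivial reduced word in $S_\p \cup S_\p^{-1}$ is the same datum as a reduced word $w_1 w_2\cdots w_n$ in $\mathcal{L}$ with $n\ge 1$ and no two consecutive letters conjugate to one another; its value in $\Ha(\Z)$ has norm $N(w) = 5^n$.

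Next, I would establish the key structural lemma: every $\alpha\in\Ha(\Z)$ with $N(\alpha) = 5^n$ admits a unique decomposition
\[
\alpha \;=\; \pm\, 5^m\, u\, w_1 w_2\cdots w_{n-2m},
\]
where $m\ge 0$, $u\in\{\pm 1,\pm i,\pm j,\pm k\}$ is a quaternion unit, and $w_1\cdots w_{n-2m}$ is a reduced word in $\mathcal{L}$. This is proved by induction on $n$. The base case $n=1$ reduces to enumerating the $24$ integer solutions of $a^2+b^2+c^2+d^2 = 5$ and checking that they are precisely the $8\cdot 3 = 24$ products of a unit with an element of $\mathcal{L}$. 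The inductive step uses that $-1$ is a quadratic residue mod $\p = 5$ (namely $2^2 \equiv -1$): working mod $5$ in the split quaternion algebra over $\mathbb{Z}/5\mathbb{Z}$, one shows that when $5\nmid\alpha$ there is exactly one letter $w\in\mathcal{L}$ with $5 \mid \alpha\bar w$, which permits us to strip $w$ off on the right and apply the inductive hypothesis to $\alpha\bar w/5$; when $5\mid\alpha$, we factor out $5$ and recurse on $\alpha/5$.

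Freeness is then immediate. If $w = s_1 s_2\cdots s_n$ is a non-trivial reduced word in $S_\p\cup S_\p^{-1}$, it is a reduced word of positive length in $\mathcal{L}$ with $N(w) = 5^n$, and the lemma gives it the unique expansion with $m=0$, $u=1$ and a non-empty reduced string. For $w$ to be trivial in $\Ha(\Z)/\sim$ we would need $w = \pm 5^m$ in $\Ha(\Z)$, whose unique expansion has empty reduced string; these expansions cannot coincide. Hence no non-trivial reduced word in $S_\p$ is $\sim$-trivial, so $\Lambda(2)$ is freely generated by $S_\p$. The main obstacle is the inductive step of the factorization lemma, specifically the existence and \emph{uniqueness} of the right-divisor letter $w\in\mathcal{L}$ with $5\mid\alpha\bar w$, which is where the congruence arithmetic of $a^2+b^2+c^2+d^2\equiv 0\pmod 5$ and the hypothesis $-1\in(\mathbb{F}_5^\times)^2$ are essential.
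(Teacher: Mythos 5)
The paper's own ``proof'' is a bare citation to Corollary 2.1.11 of Lubotzky's book \cite{Lub}, which in turn rests on precisely the Dickson--LPS unique-factorization theorem for integer quaternions of $\p$-power norm that you reconstruct, so your argument is in substance the intended one. Two arithmetic slips are worth tidying up. First, the equation $a^2+b^2+c^2+d^2=5$ has $48$ integer solutions, not $24$ (Jacobi's four-square count $8(1+5)$, or directly: $4\cdot 2$ choices for the coordinate equal to $\pm 2$ and $3\cdot 2$ for the coordinate equal to $\pm 1$), and these are the $8\cdot 6=48$ products of a unit with an element of $\mathcal{L}$ --- you wrote $8\cdot 3$, conflating $\mathcal{L}$ with $S_\p$. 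Second, since $-1$ is itself a quaternion unit, the normal form $\pm\,5^m\,u\,w_1\cdots w_{n-2m}$ with $u$ ranging over all eight units is over-parametrised and fails uniqueness trivially; either drop the $\pm$ or restrict $u$ to $\{1,i,j,k\}$. Neither slip touches the skeleton of the argument: the crux, which you correctly isolate, is existence and uniqueness of the right-divisor $w\in\mathcal{L}$ with $5\mid\alpha\bar w$ for primitive $\alpha$. This holds because $\Ha(\Z)/5\Ha(\Z)\cong M_2(\mathbb{F}_5)$ (where $-1\in(\mathbb{F}_5^\times)^2$ enters), a primitive $\alpha$ of norm $5^n$ reduces to a rank-one matrix, and the six matrices $\bar w\bmod 5$, $w\in\mathcal{L}$, realise the six one-dimensional subspaces of $\mathbb{F}_5^2$ as their images, exactly one of which equals $\ker(\alpha\bmod 5)$; once this is in place your freeness deduction is correct and is exactly how the cited result is proved.
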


An equivalent way to see this free group is as in Section 7.4 of \cite{Lub}. 
Consider the group $\Gamma=\Ha(\mathbb{Z}[\frac{1}{p}])^{\times}/Z(\Ha(\mathbb{Z}[\frac{1}{p}])^{\times})$, where $Z$ denotes the center. 
Following the notation of \cite{Lub}, we can define a sequence of subgroups of $\Gamma$ by 
$\Gamma(n):= \ker(\Gamma \rightarrow \Ha(\mathbb{Z}[\frac{1}{p}]/n\mathbb{Z}[\frac{1}{p}])^{\times}/Z(\Ha(\mathbb{Z}[\frac{1}{p}]/n\mathbb{Z}[\frac{1}{p}])^{\times}))$. 
The subgroup $\Gamma(2)$ is generated by the image of the set $S_\p^{\pm}:=\{ 1\pm 2i, 1\pm 2j, 1\pm 2k\}$ and is exactly the free group $\Lambda(2)$ above.

The following theorem of \cite{Lub} tells us that we can construct quotients of the free group which are expanders. Recall that a \emph{Ramanujan graph} is a $k$-regular graph such that all of its eigenvalues apart from 0 and possibly $2k$ lie in the interval $[k-2\sqrt{k-1}, k+2\sqrt{k-1}]$ (thus a family of Ramanujan graphs achieves the best possible spectral gap). 

\begin{theorem}\label{LubRam}[Theorem 7.4.3, \cite{Lub}]
Let $p\equiv 1 \bmod 4$ be a prime, and let $N=2M$ be an integer such that $(M,2p)=1$. Assume that there is $\varepsilon\in \mathbb{Z}$ such that $\varepsilon^2 \equiv -1 \bmod M$. Consider the set $S^{\pm}_p$ of the $p+1$ solutions $x_0+x_1 i + x_2 j + x_3 k$ of $x_0^2 + x_1^2 +x_2^2 +x_3^2= p$ (where $x_0>0$ is odd, and $x_1, x_2, x_3$ are even). Associate to each element $x_0+x_1 i + x_2 j + x_3 k$ of $S_p$ the matrix $\begin{bmatrix}x_0 + x_1 \varepsilon & x_2 +x_3 \varepsilon\\-x_2 + x_3 \varepsilon & x_0 - x_1 \varepsilon\end{bmatrix} \bmod M$ in $\PGL{2}{M}$. 
Then the image of the group generated by $S^{\pm}_p$ under this map is the quotient $\Gamma(2)/\Gamma(N)$, which is isomorphic to $\PSL{2}{M}$ if $p$ is a quadratic residue modulo $N$, and the Cayley graph of $\Gamma(2)/\Gamma(N)$ with respect to the image of $S^{\pm}_p$ is a non-bipartite Ramanujan graph. 
\end{theorem}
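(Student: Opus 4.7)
The plan is to follow the argument of \cite{Lub}, which combines an explicit computation in the quaternion algebra with strong approximation and with the deep Ramanujan--Petersson theorem of Deligne. First I would construct an explicit isomorphism of $\Z/M\Z$-algebras $\Ha(\Z/M\Z) \cong M_2(\Z/M\Z)$, using that $-1$ is a square modulo $M$. The hypothesis $\varepsilon^2 \equiv -1 \pmod M$ lets one split the quaternions: send $i \mapsto \operatorname{diag}(\varepsilon, -\varepsilon)$ and $j \mapsto \left(\begin{smallmatrix}0 & 1 \\ -1 & 0\end{smallmatrix}\right)$, extend $\Z/M$-linearly, and check the quaternion relations $i^2 = j^2 = -I$, $ij = -ji$. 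Passing to units modulo centre identifies $\Ha(\Z/M\Z)^{\times}/Z$ with $\PGL{2}{M}$, and a direct computation shows that $x_0 + x_1 i + x_2 j + x_3 k$ is sent to exactly the matrix written in the statement. The reduction modulo $N = 2M$ used to define $\Gamma(N)$ factors, for elements already in $\Gamma(2)$, through this isomorphism composed with reduction mod $M$; so the image of $\Gamma(2)$ in $\PGL{2}{M}$ is tautologically $\Gamma(2)/\Gamma(N)$.

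Second, I would identify this image with $\PSL{2}{M}$ when $p$ is a quadratic residue modulo $N$. Every generator $s \in S_p^{\pm}$ has quaternion norm $p$, so its matrix image has determinant $p \bmod M$; a word of length $\ell$ then has determinant $p^{\ell}$. The image therefore lies in the projective subgroup of classes whose representative has square determinant mod $M$, and this subgroup coincides with $\PSL{2}{M}$ precisely when $p$ is a QR mod $N$. The reverse inclusion (surjectivity onto $\PSL{2}{M}$) is a strong-approximation argument, reducing prime-by-prime via the Chinese Remainder Theorem and using that $\Ha(\Z[\frac{1}{p}])^{\times}$ surjects onto $\operatorname{PSL}_2$ of each local residue ring; this is Propositions 3.3.1--3.3.3 of \cite{Lub}. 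Non-bipartiteness is then automatic: $\PSL{2}{M}$ is perfect, hence admits no index-$2$ subgroup, hence no Cayley graph on it with any symmetric generating set is bipartite.

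The third step, which is where the real depth lies, is the Ramanujan bound. The Cayley graph $\Cay(\Gamma(2)/\Gamma(N), S_p^{\pm})$ is the finite quotient, by the congruence arithmetic lattice $\Gamma(N)$, of the $(p+1)$-regular Bruhat--Tits tree of $\operatorname{PGL}_2(\Q_p)$. By Matsushima's formula, the spectrum of the adjacency operator on this finite quotient is read off from the Hecke eigenvalues at $p$ of the automorphic representations of the multiplicative group of our quaternion algebra occurring in $L^2$ of the associated adelic double quotient. The Jacquet--Langlands correspondence transfers each such representation to an automorphic representation of $\operatorname{PGL}_2$ over $\Q$; since our quaternion algebra is ramified at infinity, these land among holomorphic cuspidal automorphic forms of weight at least $2$. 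Deligne's proof of the Ramanujan--Petersson conjecture for such forms then yields $|\lambda_p(\pi)| \leq 2\sqrt{p}$, which is exactly the Ramanujan bound on the non-trivial eigenvalues of the adjacency operator.

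The main obstacle is precisely this last step. Everything up to it is elementary, if intricate: explicit matrix work mod $M$, a standard strong-approximation / CRT argument, and a one-line structural observation about perfect groups. The Ramanujan bound itself, however, has no elementary proof known and genuinely rests on Deligne's proof of the Weil conjectures applied to the cohomology of modular curves; a self-contained treatment would have to cite this input essentially as in \cite{Lub}.
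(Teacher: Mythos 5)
The paper does not actually prove this statement --- it is quoted verbatim as Theorem~7.4.3 of \cite{Lub} and used as a black box, so there is no in-paper proof to compare your sketch against. Your outline is a faithful summary of Lubotzky's actual argument: the explicit splitting $\Ha(\Z/M\Z)\cong M_2(\Z/M\Z)$ via the square root $\varepsilon$ of $-1$ (and the check that the stated matrix is exactly the image of $x_0+x_1i+x_2j+x_3k$), the determinant-lies-in-squares computation combined with strong approximation to identify the image with $\PSL{2}{M}$ when $p$ is a quadratic residue mod $N$, perfectness of $\PSL{2}{M}$ for non-bipartiteness, and the Jacquet--Langlands transfer plus the Ramanujan--Petersson bound for the spectral estimate. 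The only minor refinement worth recording is that the transfer lands in holomorphic cusp forms of weight~$2$, so Eichler's theorem (resting on Weil's Riemann hypothesis for curves over finite fields) already gives the required eigenvalue bound; Deligne's later result is the generalisation to arbitrary weight and is strictly more than this construction needs, though Lubotzky's exposition packages the argument through the general statement just as you do.
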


We will apply this theorem to a particular sequence of subgroups of our free group $\Gamma(2)$ (to which we will from now on refer to simply as $F_3$), namely the sequence of subgroups $N_n:= \Gamma(2q^n)$. 

We have chosen $q$ such that $-1$ is a quadratic residue modulo $q$, and now we show that it is also a quadratic residue modulo $q^n$ for any $n$.

\begin{proposition}\label{QuadRes}
Let $q$ be an odd prime. For every $u\in\Z$ and every $n\in\N$, if $u$ is a quadratic residue modulo $q$ and $u$ is not zero modulo $q$, then $u$ is a quadratic residue modulo $q^n$.
\end{proposition}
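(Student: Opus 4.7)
The plan is to prove this by induction on $n$, using a Hensel-style lifting argument. The base case $n=1$ is exactly the hypothesis.

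For the inductive step, I would assume that $u$ is a quadratic residue modulo $q^n$, so there exists $v\in\Z$ with $v^2\equiv u\pmod{q^n}$. Since $u\not\equiv 0\pmod{q}$, necessarily $v\not\equiv 0\pmod{q}$, i.e. $\gcd(v,q)=1$. Write $u = v^2 + q^n w$ for some $w\in\Z$. I then look for a lift of the form $v' = v + q^n t$ and compute
\[(v')^2 = v^2 + 2vq^n t + q^{2n}t^2.\]
Since $2n\ge n+1$, we have $(v')^2 \equiv v^2 + 2vq^n t \pmod{q^{n+1}}$, so the condition $(v')^2\equiv u\pmod{q^{n+1}}$ reduces to
\[2vt \equiv w \pmod{q}.\]

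The final step is solving this congruence. Because $q$ is an odd prime, $2$ is invertible modulo $q$, and because $\gcd(v,q)=1$, $v$ is also invertible modulo $q$; thus $2v$ is a unit mod $q$ and we may take $t\equiv (2v)^{-1}w\pmod{q}$. The resulting $v'$ then satisfies $(v')^2\equiv u\pmod{q^{n+1}}$, completing the induction.

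There is no real obstacle here: the only mild subtlety is ensuring $\gcd(v,q)=1$ so that the linear congruence is solvable, which is guaranteed by the hypothesis that $u\not\equiv 0\pmod q$, and ensuring $\gcd(2,q)=1$, which uses that $q$ is odd. Both conditions are explicitly built into the statement.
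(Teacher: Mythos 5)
Your proof is correct and is essentially identical to the paper's: both are the standard Hensel lifting argument, writing $u - v^2 = q^n w$ and correcting by $v' = v + q^n t$ with $t \equiv (2v)^{-1}w \pmod q$, using that $q$ is odd and $v$ is a unit mod $q$. The paper's version is phrased with a slightly different sign convention but is the same computation.
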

\begin{proof}
By induction we may assume that there exists a number $b$ such that $b^2\equiv u\bmod q^{n-1}$. So there exists a number $c$ such that $b^2=u+cq^{n-1}$. Now take $a=b-tcq^{n-1}$, where $t$ is the inverse of $2b$ modulo $q$ ($2b$ is invertible modulo $q$ since $u$ is not zero modulo $q$). Now $a^2=b^2-2btcq^{n-1}+ t^2c^2q^{2n-2}\equiv u+cq^{n-1}-cq^{n-1}=u \bmod q^n$.
\end{proof}

We note that this implies that there exists a $q$-adic integer $\varepsilon$ such that $\varepsilon^2 = -1$. We can thus use this $\varepsilon$ to define the map in \cref{LubRam} so that the maps are compatible for $M$ equal to different powers of $q$. 

Similarly, since we chose $q$ so that $5$ will be a quadratic residue modulo $2q$, it is also a quadratic residue modulo $2q^n$ for all $n$.

\begin{proposition}
Let $q$ be an odd prime. For every $u\in\Z$ and every $n\in\N$, if $u$ is a quadratic residue modulo $2q$ and $u$ is not zero modulo $q$, then $u$ is a quadratic residue modulo $2q^n$.
\end{proposition}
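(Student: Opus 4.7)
The plan is to reduce this statement to the preceding \cref{QuadRes} via the Chinese Remainder Theorem, exploiting that $q$ is odd so that $\gcd(2,q^n)=1$. The key observation is that being a quadratic residue modulo $2q^n$ is equivalent (by CRT) to being a quadratic residue modulo $2$ and modulo $q^n$ simultaneously, and that every integer is automatically a square modulo $2$. So essentially all the content lies in upgrading the residue condition from mod $q$ to mod $q^n$, which is precisely what the previous proposition provides.

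First, I would unpack the hypothesis: since $u$ is a quadratic residue modulo $2q$, there is an integer $x$ with $x^2\equiv u\bmod 2q$. Reducing mod $q$, this shows that $u$ is a quadratic residue modulo $q$. Together with the hypothesis $u\not\equiv 0\bmod q$, \cref{QuadRes} applies and yields some $y\in\Z$ with $y^2\equiv u\bmod q^n$.

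Next, I would lift this to a square root modulo $2q^n$. Using CRT on the coprime moduli $2$ and $q^n$, I can choose $z\in\Z$ such that $z\equiv y\bmod q^n$ and $z$ has the same parity as $u$ (the parity constraint being solvable because every residue class modulo $q^n$ contains integers of both parities). Then $z^2\equiv y^2\equiv u\bmod q^n$ on the one hand, and $z^2\equiv u\bmod 2$ on the other (since modulo $2$ the only squares are $0$ and $1$, and we matched parities). Combining these via CRT again gives $z^2\equiv u\bmod 2q^n$, as required.

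There is no real obstacle here; the entire argument is a routine CRT assembly around the preceding proposition. The only point to watch is the parity step, which is trivial but should be mentioned for completeness so that the reduction to \cref{QuadRes} is fully justified.
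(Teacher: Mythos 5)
Your proof is correct and complete, but it takes a genuinely different route from the paper's. The paper repeats the Hensel-lift induction of \cref{QuadRes} directly at the composite modulus $2q^n$: given $b^2 = u + 2cq^{n-1}$, it sets $a = b - tcq^{n-1}$ with $t$ an inverse of $b$ modulo $q$, expands $a^2 = u + 2cq^{n-1}(1-bt) + t^2c^2q^{2n-2}$, and notes that the middle term is divisible by $2q^n$ while the last term vanishes to the required order for $n\ge 2$. You instead factor the modulus via the Chinese Remainder Theorem, push all the nontrivial content into \cref{QuadRes} at the odd prime power $q^n$, and reassemble using the fact that modulo $2$ every integer is a square and that a square root modulo $q^n$ can have either parity (replace $y$ by $y+q^n$). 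Both routes are valid, but yours is arguably the tidier one: in the lifting argument, killing the term $t^2c^2q^{2n-2}$ modulo $2q^n$ (and not merely modulo $q^n$) requires $tc$ to be even, which calls for a silent parity adjustment of $t$ (possible since $q$ is odd, so one may replace $t$ by $t+q$). Your CRT reduction sidesteps this mod-$2$ bookkeeping entirely by isolating it into the trivially solvable parity condition on $z$.
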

\begin{proof}
By induction, we can assume that there is a number $b$ such that $b^2\equiv u\bmod 2q^{n-1}$. So there exists a number $c$ such that $b^2=u+2cq^{n-1}$. Now take $a=b-tcq^{n-1}$, where $t$ is the inverse of $b$ modulo $q$ ($b$ is invertible modulo $q$ since $u$ is not zero modulo $q$). Now $a^2=b^2 -2btcq^{n-1}+ t^2c^2q^{2n-2}\equiv u+2cq^{n-1} -2cq^{n-1}=u \bmod 2q^n$.
\end{proof}

Thus, the assumptions of \cref{LubRam} are satisfied, and we obtain the following.

\begin{corollary}\label{isoSL}
For any $n\in\N$ we have that $F_3/N_n$ is isomorphic to $\PSL{2}{q^n}$.
\end{corollary}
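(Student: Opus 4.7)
The plan is to verify that the hypotheses of \cref{LubRam} are satisfied when we specialize $M=q^n$ and $N=2q^n$, at which point the corollary is an immediate consequence of that theorem.

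First I would check the coprimality and congruence conditions. We have $p=5\equiv 1\bmod 4$ by our standing choice, and $N_n=\Gamma(2q^n)$ has the form $2M$ with $M=q^n$. Since $q$ is an odd prime different from $5$, we have $\gcd(q^n,2p)=\gcd(q^n,10)=1$, so the condition $(M,2p)=1$ holds.

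Next I would produce the integer $\varepsilon$ with $\varepsilon^2\equiv -1\bmod q^n$. By the standing hypothesis on $q$, $-1$ is a quadratic residue modulo $q$, and clearly $-1\not\equiv 0\bmod q$; \cref{QuadRes} then lifts this to a quadratic residue modulo $q^n$, giving the required $\varepsilon$. Similarly, we need that $p=5$ is a quadratic residue modulo $N=2q^n$; this is exactly the content of the second proposition applied with $u=5$, since $5$ is a quadratic residue modulo $2q$ by our standing choice of $q$ and $5\not\equiv 0\bmod q$.

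With all hypotheses of \cref{LubRam} verified, the theorem asserts that $\Gamma(2)/\Gamma(N)$ is isomorphic to $\PSL{2}{M}$. Setting $M=q^n$ and recalling that we identify $F_3$ with $\Gamma(2)$ and $N_n$ with $\Gamma(2q^n)$, this yields $F_3/N_n\cong \PSL{2}{q^n}$, which is the desired conclusion. There is no real obstacle beyond bookkeeping the hypotheses; the two quadratic residue propositions proved immediately above exist precisely to supply the arithmetic input that \cref{LubRam} requires at each level $n$.
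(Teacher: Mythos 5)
Your proof is correct and follows exactly the route the paper takes: verify the hypotheses of \cref{LubRam} with $M=q^n$, $N=2q^n$, invoking the two quadratic-residue propositions to supply the $\varepsilon$ and the fact that $5$ is a quadratic residue modulo $2q^n$, and then read off the isomorphism. The only detail the paper adds beyond what you wrote is the remark that one can take a single $q$-adic $\varepsilon$ so that the isomorphisms are compatible across different $n$, but that is needed for later arguments (\cref{abelian}), not for the corollary itself.
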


We will now investigate the properties of certain intermediate quotients $N_k/N_n$. We first need the following lemma.

\begin{lemma}\label{abelianplus}
Let $k,n\in\N$ with $0<k\le n\le 2k$. Then the kernel $\ker(\PSL{2}{q^{n}}\rightarrow \PSL{2}{q^{k}})$ of reduction modulo $q^k$ is isomorphic to $\Z_{q^{n-k}}^3$.
\end{lemma}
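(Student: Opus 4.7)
The plan is to lift the computation to $\operatorname{SL}_2$, do the main work there using the hypothesis $n \le 2k$, and then pass back down to $\operatorname{PSL}_2$ at the end.

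First I would analyze the kernel $K_n$ of the reduction map $\operatorname{SL}_2(\mathbb{Z}/q^n\mathbb{Z}) \to \operatorname{SL}_2(\mathbb{Z}/q^k\mathbb{Z})$. Any element of $K_n$ can be written as $I + q^k B$ for some $B \in M_2(\mathbb{Z}/q^n\mathbb{Z})$, and two matrices $B, B'$ give the same element iff $B \equiv B' \pmod{q^{n-k}}$, so it is natural to regard $B \in M_2(\mathbb{Z}/q^{n-k}\mathbb{Z})$. The determinant expansion gives
\[
\det(I + q^k B) \;=\; 1 + q^k \operatorname{tr}(B) + q^{2k} \det(B),
\]
and here the hypothesis $n \le 2k$ is used twice: it kills the $q^{2k}\det(B)$ term modulo $q^n$, so the determinant condition reduces to $\operatorname{tr}(B) \equiv 0 \pmod{q^{n-k}}$; and it also kills cross terms in the product
\[
(I + q^k B)(I + q^k B') \;=\; I + q^k (B + B') + q^{2k} B B' \;\equiv\; I + q^k(B + B') \pmod{q^n}.
\]
Therefore $K_n$ is isomorphic, as a group, to the additive group of traceless matrices in $M_2(\mathbb{Z}/q^{n-k}\mathbb{Z})$, which is a free $\mathbb{Z}/q^{n-k}\mathbb{Z}$-module of rank $3$ (for instance on the basis $\bigl(\begin{smallmatrix}1&0\\0&-1\end{smallmatrix}\bigr)$, $\bigl(\begin{smallmatrix}0&1\\0&0\end{smallmatrix}\bigr)$, $\bigl(\begin{smallmatrix}0&0\\1&0\end{smallmatrix}\bigr)$), hence isomorphic to $(\mathbb{Z}/q^{n-k}\mathbb{Z})^3$.

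Next I would transport this to $\operatorname{PSL}_2$. Let $L_n \subset \operatorname{SL}_2(\mathbb{Z}/q^n\mathbb{Z})$ be the preimage of $\{\pm I\}$ under reduction mod $q^k$; then the kernel of $\operatorname{PSL}_2(\mathbb{Z}/q^n\mathbb{Z}) \to \operatorname{PSL}_2(\mathbb{Z}/q^k\mathbb{Z})$ is $L_n/\{\pm I\}$. Clearly $L_n = K_n \sqcup (-I)K_n$. Since $q$ is odd and $k \ge 1$, we have $-I \not\equiv I \pmod{q^k}$, so $-I \notin K_n$; in particular $\{\pm I\} \cap K_n = \{I\}$. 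Hence the composition
\[
K_n \hookrightarrow L_n \twoheadrightarrow L_n/\{\pm I\}
\]
is injective, and it is surjective because every coset in $L_n/\{\pm I\}$ has a representative in $K_n$ (given $M \in L_n$, either $M$ or $-M$ lies in $K_n$). This gives $L_n/\{\pm I\} \cong K_n \cong (\mathbb{Z}/q^{n-k}\mathbb{Z})^3$.

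The only real content is step one; I expect the mild obstacle to be bookkeeping (making sure the bijection $K_n \leftrightarrow \{B \in M_2(\mathbb{Z}/q^{n-k}\mathbb{Z}) : \operatorname{tr} B = 0\}$ is well-defined and a group isomorphism, both of which depend on the hypothesis $n \le 2k$). The passage from $\operatorname{SL}_2$ to $\operatorname{PSL}_2$ is essentially formal once one notes that $-I$ is a nontrivial element of $\operatorname{SL}_2(\mathbb{Z}/q^k\mathbb{Z})$ for odd $q$.
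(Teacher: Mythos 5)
Your proof is correct and rests on the same computation as the paper's: parametrize the kernel by $I + q^k B$ with $B$ (effectively) traceless mod $q^{n-k}$, and use $n \le 2k$ to kill the $q^{2k}$ cross terms so that matrix multiplication becomes addition of the $B$'s. The paper works directly inside $\PSL{2}{q^n}$ and writes down the parametrized matrices $\begin{bmatrix}1+aq^k & bq^k \\ cq^k & 1-aq^k\end{bmatrix}$ without separately verifying the determinant constraint (which forces the trace of $B$ to vanish) or addressing the $\pm I$ ambiguity in passing to $\PSL{2}{\cdot}$; both points are handled implicitly by the choice of representative. You make both explicit: the determinant expansion $\det(I+q^kB) = 1 + q^k\operatorname{tr}(B) + q^{2k}\det(B)$ shows why the parametrization is by traceless matrices, and the observation that $-I \notin K_n$ (since $q$ is odd and $k \ge 1$) cleanly justifies that $K_n$ maps isomorphically onto the $\operatorname{PSL}_2$-kernel. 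So this is the same route with a tidier treatment of the $\operatorname{SL}_2\to\operatorname{PSL}_2$ descent, not a genuinely different argument.
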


\begin{proof}
We have that $\ker(\PSL{2}{q^{n}}\rightarrow \PSL{2}{q^{k}})$ is equal to $\left\{B\in\PSL{2}{q^n}\mid B\equiv I_2\bmod q^k \right\}$, where $I_2$ denotes the 2-by-2 identity matrix.
This is precisely the set of matrices of the form $\begin{bmatrix}1+aq^k & bq^k \\ cq^k & 1-aq^k\end{bmatrix}$ with $a$, $b$ and $c$ in $\Z_{q^{n-k}}$. For every two such matrices, we find
\[\begin{bmatrix}1+aq^k & bq^k \\ cq^k & 1-aq^k\end{bmatrix}\begin{bmatrix}1+a'q^k & b'q^k \\ c'q^k & 1-a'q^k\end{bmatrix}\equiv\begin{bmatrix}1+aq^k+a'q^k & bq^k+b'q^k \\ cq^k+c'q^k & 1-aq^k-a'q^k\end{bmatrix}\bmod q^n.\]
Thus we have that $\ker(\PSL{2}{q^{n}}\rightarrow \PSL{2}{q^{k}})$ is isomorphic to $\Z_{q^{n-k}}^3$.
\end{proof}

\begin{corollary}\label{abelian}
Let $k,n\in\N$ with $0<k\le n\le 2k$. Then $N_k/N_n$ is isomorphic to $\Z_{q^{n-k}}^3$.
\end{corollary}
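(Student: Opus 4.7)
The plan is to combine Corollary \ref{isoSL} with Lemma \ref{abelianplus} essentially verbatim, the main point being that the isomorphisms furnished by Corollary \ref{isoSL} are compatible with the reduction maps on the $\operatorname{PSL}$ side.

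First I would set up the vertical identifications. By Corollary \ref{isoSL}, for each $m\in\N$ there is an isomorphism $\varphi_m\colon F_3/N_m \xrightarrow{\cong} \PSL{2}{q^m}$ coming from the assignment in \cref{LubRam} applied to $M=q^m$. Since we chose the $q$-adic integer $\varepsilon$ after \cref{QuadRes} so that $\varepsilon^2=-1$ in $\Z_q$, the entries of the matrix assigned to a generator $x_0+x_1 i+x_2 j+x_3 k\in S_p^\pm$ are defined over $\Z_q$, and their reductions modulo $q^k$ and modulo $q^n$ are obtained from the same underlying element. Consequently the diagram
\[
\xymatrix{
F_3/N_n \ar[r]^-{\varphi_n}\ar[d] & \PSL{2}{q^n}\ar[d]^-{\pi_{n,k}}\\
F_3/N_k \ar[r]^-{\varphi_k} & \PSL{2}{q^k}
}
\]
commutes, where the left vertical arrow is the natural quotient (which is well-defined since $N_n\subset N_k$) and $\pi_{n,k}$ is reduction modulo $q^k$.

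Next I would identify kernels. The kernel of the left vertical arrow $F_3/N_n \to F_3/N_k$ is precisely $N_k/N_n$. By commutativity of the square, $\varphi_n$ restricts to an isomorphism between $N_k/N_n$ and $\ker(\pi_{n,k})$.

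Finally, applying Lemma \ref{abelianplus} under the hypothesis $0<k\le n\le 2k$ gives $\ker(\pi_{n,k})\cong \Z_{q^{n-k}}^3$, whence $N_k/N_n\cong \Z_{q^{n-k}}^3$. The only non-routine ingredient is the compatibility of the isomorphisms $\varphi_m$ with the reduction maps, and this is guaranteed by the $q$-adic choice of $\varepsilon$ made earlier; I do not expect any real obstacle beyond spelling out this compatibility.
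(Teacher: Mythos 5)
Your proof is correct and follows essentially the same route as the paper: the paper also deduces the corollary by observing that the isomorphism $F_3/N_n\cong\PSL{2}{q^n}$ from \cref{LubRam} commutes with reduction modulo $q^k$ (thanks to the $q$-adic choice of $\varepsilon$), identifying $N_k/N_n$ with $\ker(\PSL{2}{q^n}\to\PSL{2}{q^k})$, and then invoking \cref{abelianplus}. You merely spell out the commuting square more explicitly than the paper does.
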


\begin{proof}
The map in \cref{LubRam} which provides the isomorphism between $\Gamma(2)/\Gamma(2q^n)$ and $\PSL{2}{q^n}$ commutes with the map of reduction modulo $q^k$, and thus we have that $N_k/N_n \cong \Gamma(2q^k)/\Gamma(2q^n) \cong \ker(\PSL{2}{q^{n}}\rightarrow \PSL{2}{q^{k}}) \cong \Z_{q^{n-k}}^3$. 
\end{proof}

\begin{remark}\label{MatrixGen}
A fact that will be of direct use to us later is that, since every element of $N_{n-1}/N_n$ can be viewed as a matrix of the form $\begin{bmatrix}1+cq^{n-1} & dq^{n-1} \\ fq^{n-1} & 1-cq^{n-1}\end{bmatrix}$ with $c$, $d$ and $f$ in $\Z_{q}$, a generating set of $N_{n-1}/N_n$ can be given by the matrices 
$$\begin{bmatrix}1+q^{n-1} & 0 \\ 0 & 1-q^{n-1}\end{bmatrix}, \begin{bmatrix}1 & q^{n-1} \\ 0 & 1\end{bmatrix}, \begin{bmatrix}1 & 0 \\ q^{n-1} & 1\end{bmatrix}.$$

We note that for any $k<n-1$, the matrix $\begin{bmatrix}1+q^{n-1} & 0 \\ 0 & 1-q^{n-1}\end{bmatrix}$ is equivalent to the matrix 
$$\begin{bmatrix} q^{2n-2}+q^{n-1}+1&-q^{n+k}\\ q^{2n-k-3}&-q^{n-1}+1\end{bmatrix},$$ 
which is the commutator of the matrices $\begin{bmatrix}1 & q^{k+1} \\ 0 & 1\end{bmatrix}$ and $\begin{bmatrix}1 & 0 \\ q^{n-k-2} & 1\end{bmatrix}$.
\end{remark}

\subsection{Homology covers}\label{HomCovers}

Given a finite graph $X$, one can construct a covering graph $\widetilde{X}$ of $X$ such that $\widetilde{X}$ is the cover corresponding to the the quotient $\pi_1(X) \rightarrow \bigoplus^{r} \mathbb{Z}_m$ of highest rank $r$ possible. Indeed, since $\pi_1(X)$ is a free group, the rank $r$ is simply the rank of this free group. 

We recall that as a graph, the cover can be viewed in the following way. 
First, choose a maximal spanning tree $T$ of the graph $X$.
Construct the Cayley graph of $\bigoplus^{r} \mathbb{Z}_m$ with respect to the image of the free generating set of $\pi_1(X)$.
Note that the free generating set of $\pi_1(X)$ is in bijection with its image in $\bigoplus^{r} \mathbb{Z}_m$, and also in bijection with the edges of $X$ not contained in the maximal tree $T$. Let $\kappa$ be the bijection between the edges not in $T$ and this generating set of $\bigoplus^{r} \mathbb{Z}_m$.

The vertices of the cover $\widetilde{X}$ will be in bijection with the vertices of $T\times \bigoplus^{r} \mathbb{Z}_m$, and we will think of them as copies of $T$ indexed by elements of $\bigoplus^{r} \mathbb{Z}_m$. The edges of $\widetilde{X}$ are defined as follows: if two vertices $v$ and $w$ in $X$ are connected by an edge $e$ which is not in $T$, then given such a vertex $\widetilde{v}$ in one of the copies of $T$ indexed by an element $a$ of $\bigoplus^{r} \mathbb{Z}_m$, we connect it via an edge to a vertex $\widetilde{w}$ in the copy of $T$ indexed by the element $a\kappa(e)$ of $\bigoplus^{r} \mathbb{Z}_m$; if two vertices $v$ and $w$ in $X$ are connected by an edge $e$ which is in $T$, then we connect $\widetilde{v}$ and $\widetilde{w}$ by an edge in the same copy of $T$.

The covering space $\widetilde{X}$ obtained in this way is called the $m$\emph{-homology cover of }$X$. 

The situation we are interested in is as follows: we have a sequence of quotients of the free group, $\{F_3/N_n\}$, metrized using the Cayley graph metric coming from the free generating set of $F_3$, and we consider the sequence of their $q$-homology covers, with $q$ as in the previous subsection. By covering space theory (for details, see for example \cite{Kh13}), the $q$-homology covers of the $F_3/N_n$ are also quotients of $F_3$, by the subgroups 
$$\Theta(N_n):= N_n^q [N_n,N_n]$$
where $N_n^q$ denotes the subgroup $\left\langle g^q : g\in N_n \right\rangle$ of $F_3$ generated by all the $q$th powers of elements of $N_n$. 
Since $\Theta(N_n)< N_n$, we have that $\cap_n N_n = \{1\}$ implies that $\cap_n \Theta(N_n) = \{1\}$ and so we can consider the box space $\Box_{\Theta(N_n)} F_3$. 

The box space of a free group corresponding to a $2$-homology cover was first considered by Arzhantseva, Guentner and \v{S}pakula in \cite{AGS}, who proved that such a box space coarsely embeds into a Hilbert space, as one can construct a wall structure on it using the covering space structure. 

In \cite{Kh13}, this was generalized as follows. 
\begin{theorem}[Theorem 4, \cite{Kh13}]
Given any $m\geq 2$ and any sequence $\{X_n\}$ of $2$-connected finite graphs where the number of maximal spanning trees in $X_n$ not containing a given edge does not depend on the edge, the sequence of $\mathbb{Z}_m$-homology covers of the $X_n$ coarsely embeds into a Hilbert space (uniformly with respect to $i$) if $\girth(X_n)\rightarrow \infty$.
\end{theorem}

Note that this holds even if the sequence $\{X_n\}$ does not embed coarsely into a Hilbert space. 
In particular, we have that the box space $\Box_{\Theta(N_n)} F_3$ corresponding to the $q$-homology covers of any box space $\Box_{N_n} F_3$ of the free group embeds coarsely into a Hilbert space, even if the box space $\Box_{N_n} F_3$ is an expander sequence. 

We now restrict ourselves to the following setting: the sequence $\{N_n\}$ is as defined in the previous subsection, and we consider the sequence of subgroups $\{\Theta(N_n)\}$ corresponding to the $q$-homology covers. We have the following relation between the sequences, which we will need in the subsequent sections.

\begin{proposition}\label{ntok}
Let $k,n\in\N$ with $0<k<n$. Then $N_n\Theta(N_k) = N_{k+1}$.
\end{proposition}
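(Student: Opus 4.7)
The goal is an equality of two normal subgroups of $F_3$, and I would establish the two inclusions separately. For $N_n\Gamma(N_k)\subseteq N_{k+1}$, note that $N_k/N_{k+1}\cong\Z_q^3$ by \cref{abelian}, so this quotient is abelian of exponent $q$, and hence both $[N_k,N_k]$ and $N_k^q$, and therefore $\Gamma(N_k)$, are contained in $N_{k+1}$. Combining this with $N_n\subseteq N_{k+1}$ (which holds because $n>k$) yields $N_n\Gamma(N_k)\subseteq N_{k+1}$.

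For the reverse inclusion I would split into two ranges. When $k<n\le 2k$, \cref{abelian} gives $N_k/N_n\cong\Z_{q^{n-k}}^3$. In this abelian group, the image of $N_{k+1}$ is exactly the subgroup of elements divisible by $q$, namely $q\cdot(N_k/N_n)$, which coincides with the image of the $q$-th power map on $N_k/N_n$. Therefore $N_{k+1}\subseteq N_k^q N_n\subseteq\Gamma(N_k)N_n$, as wanted.

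When $n>2k$, I would induct on $n$, assuming the proposition for $n-1$. Since $\Gamma(N_k)$ is characteristic in $N_k$, hence normal in $F_3$, the product $N_n\Gamma(N_k)$ is a subgroup, so to deduce $N_{k+1}\subseteq N_n\Gamma(N_k)$ from the inductive hypothesis $N_{k+1}=N_{n-1}\Gamma(N_k)$ it suffices to show $N_{n-1}\subseteq N_n\Gamma(N_k)$, that is, that every element of $N_{n-1}/N_n$ lies in $\Gamma(N_k)N_n/N_n$. By \cref{MatrixGen}, $N_{n-1}/N_n$ is generated by three matrices. The two unipotent generators are the $q$-th powers of $\begin{bmatrix}1 & q^{n-2}\\0 & 1\end{bmatrix}$ and $\begin{bmatrix}1 & 0\\ q^{n-2}& 1\end{bmatrix}$ respectively, both of which lie in $N_k/N_n$ since $n-2\ge 2k-1\ge k$, so both generators belong to $N_k^q N_n/N_n$. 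The diagonal generator, by the commutator identity in \cref{MatrixGen} (applied with its free parameter set to $k-1$), equals the commutator of $\begin{bmatrix}1 & q^{k}\\0 & 1\end{bmatrix}$ and $\begin{bmatrix}1 & 0\\ q^{n-k-1}& 1\end{bmatrix}$, both of which lie in $N_k/N_n$ since $n-k-1\ge k$ thanks to $n\ge 2k+1$; hence this generator lies in $[N_k,N_k]N_n/N_n\subseteq\Gamma(N_k)N_n/N_n$, and the induction is complete.

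The main obstacle I anticipate is precisely the dichotomy between the two regimes: the commutator identity from \cref{MatrixGen} that produces the diagonal generator needs $n\ge 2k+1$ and is therefore unavailable in the range $k<n\le 2k$, where one must instead exploit the abelian structure of $N_k/N_n$ directly. The two ranges dovetail so that the abelian argument serves as the base case for the inductive commutator argument.
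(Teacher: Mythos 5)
Your proof is correct, but it takes a genuinely different route from the paper's, and the difference is instructive. For the easy inclusion $N_n\Gamma(N_k)\subseteq N_{k+1}$ you exploit that $N_k/N_{k+1}\cong\Z_q^3$ has exponent $q$ to get $N_k^q\leq N_{k+1}$; the paper instead verifies this by an explicit quaternion computation of $x^q \bmod q^{k+1}$, so your argument here is cleaner. For the reverse inclusion, the paper runs a single induction on $n-k$ with the trivial base case $n=k+1$, and in the inductive step shows that \emph{all three} generators of $N_{n-1}/N_n$ from \cref{MatrixGen} --- including the diagonal one --- are $q$-th powers modulo $q^n$ of the corresponding matrices with $q^{n-2}$ in place of $q^{n-1}$ (for the diagonal matrix, $(1+q^{n-2})^q\equiv 1+q^{n-1}\bmod q^n$ once $n\geq 3$), so all three lie in $N_{n-2}^q\leq\Gamma(N_{n-2})\leq\Gamma(N_k)$ modulo $N_n$. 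This removes the dichotomy you flag as the ``main obstacle'': the commutator identity from \cref{MatrixGen} is simply not needed, and a single induction from $n=k+1$ suffices without any case split on whether $n\leq 2k$. Your two-regime approach works and shows a second way to handle the diagonal generator, but it is more elaborate than necessary; the paper's uniform $q$-th-power observation buys a shorter and more transparent inductive step. (A small side note: the range ``$k<n-1$'' stated in \cref{MatrixGen} for the commutator identity should really be $k\leq n-3$ for the $(2,1)$-entry $q^{2n-k-3}$ to vanish modulo $q^n$; your substitution $k\mapsto k-1$ stays within the corrected range under $n\geq 2k+1$ with $k\geq 1$, so your argument is unaffected.)
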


\begin{proof}
We will prove this proposition by induction on $n-k$ for fixed $k$. For $n=k+1$ we clearly have that $N_{k+1} < N_{k+1} \Theta(N_k)$. So  to prove the base case of the induction, it suffices to show that $\Theta(N_k)=N_k^q[N_k,N_k]< N_{k+1}$.
We will in fact show that $N_k^q< N_{k+1}$ and $[N_k,N_k]< N_{k+1}$. 

To see that $N_k^q< N_{k+1}$, take an element $x\in N_k < F_3$. Up to the equivalence relation $\sim$, we can assume that $x$ has the form $x=1+aq^k+bq^ki+cq^kj+dq^kk$. 
Then we can make the following computation:
\begin{align*}
x^q & = \left(1+aq^k +bq^ki+cq^kj+dq^kk\right)^q\\
& = 1 +  q^{k+1}(a+bi+cj+dk)  + \dfrac{q(q-1)}{2}q^{2k}(a+bi+cj+dk)^2+\ldots\\
& \equiv 1 & \bmod q^{k+1}\\
\end{align*}
and so we have that $x^q \in N_{k+1}$ and thus $N_k^q< N_{k+1}$. 

We also have that $[N_k,N_k]< N_{k+1}$, since the quotient $N_k/N_{k+1}$ is abelian by \cref{abelian}. 
Therefore we have $\Theta(N_k)\subset N_{k+1}$, and this proves the proposition for $n=k+1$.

Now by induction we may assume that $N_{n-1}\Theta(N_k)=N_{k+1}$. As $N_n< N_{n-1}$ we have that $N_n\Theta(N_k)< N_{k+1}$.

We have that $N_{n-1}\Theta(N_k)> N_n\Theta(N_k)$. It suffices now to show that $N_{n-1}\Theta(N_k)< N_n\Theta(N_k)$, or equivalently that $N_{n-1}\Theta(N_k)/ N_n\Theta(N_k)$ is trivial.
Due to the second isomorphism theorem we have that 
$$\dfrac{N_{n-1}\Theta(N_k)}{ N_n\Theta(N_k)} = \dfrac{N_{n-1} N_n\Theta(N_k)}{ N_n\Theta(N_k)} \cong \dfrac{N_{n-1}}{ N_{n-1}\cap N_n\Theta(N_k)}.$$

Now this is a quotient of $N_{n-1}/N_n$ since $N_{n-1}\cap N_n\Theta(N_k)>N_n$, and is therefore isomorphic to a quotient of $\Z_q^3$ as a consequence of \cref{abelian}. 
So it suffices to take a generating set of $N_{n-1}/N_n$ and show that the elements of this generating set lie in $N_n\Theta(N_k)$ modulo $N_n$. This will ensure that the quotient $N_{n-1}/ (N_{n-1}\cap N_n\Theta(N_k))$ of $N_{n-1}/N_n$ is trivial.

In fact it suffices to show that the generating elements lie in $\Theta(N_{n-2})$ modulo the subgroup $N_n$, since $\Theta(N_{n-2})< N_n\Theta(N_k)$. 

Due to \cref{isoSL} we can view $N_{n-1}/N_n$ as a subgroup of $\PSL{2}{q^n}$.
As in \cref{MatrixGen}, an example of a generating set of $N_{n-1}/N_n$ is 
$$\left\{\begin{bmatrix} q^{n-1}+1&0\\ 0&-q^{n-1}+1\end{bmatrix}, \begin{bmatrix} 1&q^{n-1}\\ 0&1\end{bmatrix},\begin{bmatrix} 1&0\\ q^{n-1}&1\end{bmatrix}\right\}.$$ 
Now modulo $q^n$, 
\begin{align*}
\begin{bmatrix} q^{n-1}+1&0\\ 0&-q^{n-1}+1\end{bmatrix} &\equiv \begin{bmatrix} q^{n-2}+1&0\\ 0&-q^{n-2}+1\end{bmatrix}^q \\
\begin{bmatrix} 1&q^{n-1}\\ 0&1\end{bmatrix} &\equiv \begin{bmatrix} 1&q^{n-2}\\ 0&1\end{bmatrix}^q \\
\begin{bmatrix} 1&0\\ q^{n-1}&1\end{bmatrix} &\equiv\begin{bmatrix} 1&0\\ q^{n-2}&1\end{bmatrix}^q.
\end{align*}
Thus all elements of $N_{n-1}/N_n$ lie in $N_n\Theta(N_k)/N_n$ so $N_{n-1}/(N_{n-1}\cap N_n\Theta(N_k))$ is trivial and therefore $N_n\Theta(N_k)=N_{n-1}\Theta(N_k)=N_{k+1}$.
\end{proof}

\subsection{Representation theory}\label{RepSec}
The aim of this section is to study representations of the quotients $F_3/ (N_n \cap \Theta(N_k))$ for certain values of $n$ and $k$. 

All representations of $\Qo{n-1}{k}$ can be lifted to representations of $\Qo{n}{k}$. In this section we want to show that the dimensions of the representations of $\Qo{n}{k}$ which are not such lifts grow like $q^n$ for $k$ fixed. \footnote{Alain Valette pointed out to us that a proof of this can also be given using the Mackey machine.}

For $k,n\in\N$ with $0<2k\le n$ define $B_{k,n}$ as follows: 
$$B_{k,n}:=\left\{\begin{bmatrix}a&b\\0&a^{-1}\end{bmatrix}\in N_k/N_n \mid a\in\Z_{q^n}^\times, b\in\Z_{q^n}\right\}.$$
Another way of stating this condition on $a\in\Z_{q^n}^\times$ and $b\in\Z_{q^n}$ is that $a\equiv 1 \bmod q^k$ and $b\equiv 0 \bmod q^k$.
Note that $B_{k,n}$ is a subgroup of $N_k/N_n$.
In fact, for every such choice of $a$ and $b$, we have that  $\begin{bmatrix}a&b\\0&a^{-1}\end{bmatrix}$ is an element of $N_k/N_n$ and
we thus see that $B_{k,n}$ has order $(q^{n-k})^2=q^{2n-2k}$.

\begin{lemma}\label{allRep}
Let $k,n,l\in\N$ with $0<2k\le 2k+l\le n$. Then every irreducible representation $\pi$ of $B_{k,n}$ for which $\pi\left(\begin{bmatrix}1&q^{n-l}\\0&1\end{bmatrix}\right)=\Id$ and $\pi\left(\begin{bmatrix}1&q^{n-l-1}\\0&1\end{bmatrix}\right)\neq\Id$ has dimension $q^{n-2k-l}$.
\end{lemma}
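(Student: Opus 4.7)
The plan is to realize $B_{k,n}$ as a split metabelian group and to apply Mackey--Clifford theory. First, I would write $B_{k,n}=U\rtimes T$, where $U$ consists of the matrices $\begin{bmatrix}1&b\\0&1\end{bmatrix}$ with $b\in q^k\Z/q^n\Z$ and $T$ is the diagonal torus $\{\mathrm{diag}(a,a^{-1}):a\in 1+q^k\Z/q^n\Z\}$. Both are cyclic $q$-groups of order $q^{n-k}$; crucially, since $q$ is odd, squaring is an automorphism of the multiplicative group $1+q^k\Z/q^n\Z$. A direct matrix calculation gives $\mathrm{diag}(a,a^{-1})\begin{bmatrix}1&b\\0&1\end{bmatrix}\mathrm{diag}(a^{-1},a)=\begin{bmatrix}1&a^2b\\0&1\end{bmatrix}$, so the $T$-action on $U$ is by multiplication by $a^2$.

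Next, I would parameterize the characters of the cyclic group $U$ by $c\in\Z_{q^{n-k}}$ via $\chi_c\bigl(\begin{bmatrix}1&q^km\\0&1\end{bmatrix}\bigr)=\exp(2\pi i cm/q^{n-k})$. In these coordinates the two hypotheses translate cleanly: $\pi\bigl(\begin{bmatrix}1&q^{n-l}\\0&1\end{bmatrix}\bigr)=\Id$ forces every $\chi_c$ occurring in $\pi|_U$ to satisfy $q^l\mid c$, while $\pi\bigl(\begin{bmatrix}1&q^{n-l-1}\\0&1\end{bmatrix}\bigr)\neq\Id$ forces at least one such $\chi_c$ to satisfy $q^{l+1}\nmid c$. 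Since $T$ acts on characters by $a\cdot\chi_c=\chi_{a^2c}$ and $a^2$ is a unit mod $q$, the $q$-valuation of $c$ is a $T$-invariant, so every character appearing in $\pi|_U$ has $q$-valuation exactly $l$.

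Third, I would pin down the $T$-orbit of such a $\chi_c$. Writing $c=q^l u$ with $u$ coprime to $q$ and $a^2=1+q^k t$ with $t$ ranging over $\Z_{q^{n-k}}$, the orbit becomes $\{c+q^{l+k}tu:t\in\Z_{q^{n-k}}\}=c+q^{l+k}\Z_{q^{n-k}}$, using that $u$ is a unit in $\Z_{q^{n-k}}$; this coset has exactly $q^{n-2k-l}$ elements, and the hypothesis $2k+l\le n$ is what keeps this exponent non-negative. By Clifford theory applied to the normal abelian subgroup $U$, every irreducible $\pi$ of $B_{k,n}$ restricts to $U$ as a sum of characters forming a single $T$-orbit; and because the stabilizer $T_c$ fixes $\chi_c$ and the extension $U\rtimes T_c$ is split, $\chi_c$ extends to a one-dimensional character $\tilde\chi_c$ of $U\rtimes T_c$, whence $\pi=\operatorname{Ind}_{U\rtimes T_c}^{U\rtimes T}\tilde\chi_c$ and $\dim\pi=[T:T_c]=q^{n-2k-l}$.

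The main obstacle is pure bookkeeping: one must verify that squaring really is an automorphism of $1+q^k\Z_{q^n}$ (oddness of $q$ is essential), reduce the $T$-action from $\Z_{q^n}$ down to $\Z_{q^{n-k}}$ without miscounting orbits, and check that the two hypotheses together pin the common $q$-valuation down to exactly $l$ rather than merely $\ge l$. Everything beyond that is standard modular arithmetic and standard Clifford--Mackey theory for a semidirect product with abelian kernel.
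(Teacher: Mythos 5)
Your proof is correct, and it takes a genuinely different route from the paper's. The paper proceeds bottom-up: it constructs explicit one-dimensional characters $\rho_j$ and explicit representations $\pi_j$ of dimension $q^{n-2k}$, verifies irreducibility of the $\pi_j\otimes\rho_{j'}$ by computing $\langle\chi,\chi\rangle=1$, counts dimensions (together with lifts from $B_{k,n-1}$) to show this exhausts $\mathrm{Irr}(B_{k,n})$, and then runs an induction on $n$ (with $l$ tracking how far one has lifted). Your argument instead identifies $B_{k,n}=U\rtimes T$ with $U$ abelian normal and applies Clifford--Mackey theory directly: an irreducible restricted to $U$ is a single $T$-orbit of characters, the hypotheses pin the common $q$-valuation of the orbit to exactly $l$, the orbit size is $q^{n-2k-l}$, and $\dim\pi=[T:T_c]$ equals that orbit size. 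This replaces the paper's explicit constructions and induction with one orbit count; the small price is invoking the little-group machinery (including the extendability of $\chi_c$ over the split stabilizer $U\rtimes T_c$, which you correctly address). It is worth noting that the paper's own footnote remarks that such a ``Mackey machine'' proof exists --- you have supplied exactly the proof Valette alluded to. One minor bookkeeping point: where you write $a^2=1+q^kt$ with ``$t$ ranging over $\Z_{q^{n-k}}$,'' only $t\bmod q^{n-2k}$ is relevant once everything is reduced mod $q^{n-k}$; this does not affect your conclusion since the resulting set $q^{l+k}\Z_{q^{n-k}}$ still has exactly $q^{n-2k-l}$ elements, but it is worth stating precisely.
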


\begin{proof}
If $n=2k$, then $l=0$ and due to \cref{abelian} we know that $B_{k,n}$ is abelian.
In this case all irreducible representations of $B_{k,n}$ have dimension $1$, which satisfies this proposition.

For other values of $k$ and $n$, we will now consider the irreducible representations of $B_{k,n}$.

Take $\omega = \displaystyle e^\frac{2\pi i}{q^{n-k}}$.
As $k\ge 1$ we have that $1+q^k$ is of order $q^{n-k}$ in $\Z_{q^n}^\times$ and therefore generates $\{\alpha\in\Z_{q^n}^\times \mid \alpha\equiv 1 \bmod q^k\}$.
Now for every $j\in\{0,1,\ldots,q^{k}-1\}$ define $\rho_j\colon B_{k,n} \to \C $ by
$$\begin{bmatrix} (1+q^k)^\beta&b \\ 0&(1+q^k)^{-\beta} \end{bmatrix}  \mapsto \omega^{\beta j}.$$

For every such $j$ with $j\not\equiv 0 \bmod q$ set $V_j$ to be the finite-dimensional Hilbert space with $\{\xi_x \mid x\equiv j\bmod q^k, x\in\Z_{q^{n-k}}\}$ as orthogonal basis, where $\xi_x$ denotes the sequence indexed by elements of $\mathbb{Z}_{q^{n-k}}$ which takes the value 1 at $x\in \mathbb{Z}_{q^{n-k}}$ and 0 elsewhere.
Let $\pi_j$ be the representation of $B_{k,n}$ on $V_j$ such that 
$$\pi_j\left(\begin{bmatrix} a&b\\ 0&a^{-1}\end{bmatrix}\right)\xi_x=e^\frac{2\pi iabx}{q^{n}}\xi_{a^2x}.$$
\\

Now we can calculate the characters of these representations:
\begin{eqnarray*}
\chi_{\rho_j}\left(\begin{bmatrix} a&b\\ 0&a^{-1}\end{bmatrix}\right) & = & \rho_j\left(\begin{bmatrix} a&b\\ 0&a^{-1}\end{bmatrix}\right)\\
\chi_{\pi_j}\left(\begin{bmatrix} a&b\\ 0&a^{-1}\end{bmatrix}\right) & = & \sum_{x\equiv j \bmod q^{k}} \left\langle\xi_x , \pi_j\left(\begin{bmatrix} a&b\\ 0&a^{-1}\end{bmatrix}\right)\xi_x \right\rangle\\
& = & \sum_{x\equiv j \bmod q^{k}} \left\langle\xi_x , e^\frac{2\pi iabx}{q^{n}}\xi_{a^2x} \right\rangle\\
& = & \sum_{x\equiv j \bmod q^{k}} e^\frac{2\pi iabx}{q^{n}}\left\langle\xi_x , \xi_{a^2x} \right\rangle
\end{eqnarray*}
Note that if $a\equiv 1 \bmod q^{k}$ and $a^{2} \equiv 1 \bmod q^{n-k}$, then $a \equiv 1 \bmod q^{n-k}$.
Thus, if $a\not\equiv 1 \bmod q^{n-k}$, then for every $x\in\Z_{q^{n-k}}$ we have $\left\langle\xi_x , \xi_{a^2x} \right\rangle=0$, so $\chi_{\pi_j}\left(\begin{bmatrix} a&b\\ 0&a^{-1}\end{bmatrix}\right) = 0$.
If $b\not\equiv 0 \bmod q^{n-k}$, then $\displaystyle\sum_{x\equiv j \bmod q^{k}} e^\frac{2\pi iabx}{q^{n}}=0$, so $\chi_{\pi_j}\left(\begin{bmatrix} a&b\\ 0&a^{-1}\end{bmatrix}\right) = 0$.
If $a\equiv 1 \bmod q^{n-k}$ and $b\equiv 0 \bmod q^{n-k}$, then $a^2x\equiv x \bmod q^{n-k}$ and $\displaystyle\sum_{x\equiv j \bmod q^{k}} e^\frac{2\pi iabx}{q^{n}}=q^{n-2k}e^\frac{2\pi ijb}{q^{n}}$.
Now for every $j,j'\in\{0,\ldots,q^k-1\}$ with $j\not\equiv 0\bmod q$ we can compute $\langle\chi_{\pi_j\otimes\rho_{j'}},\chi_{\pi_j\otimes\rho_{j'}}\rangle$ using the fact that $|\chi_{\rho_j}(g)|=1$ for every $g\in B_{k,n}$:
\begin{eqnarray*}
\langle\chi_{\pi_j\otimes\rho_{j'}},\chi_{\pi_j\otimes\rho_{j'}}\rangle & = & \frac{1}{|B_{k,n}|}\sum_{a\equiv 1, b\equiv 0 \bmod q^{k}} \left|\chi_{\rho_j}\left(\begin{bmatrix} a&b\\ 0&a^{-1}\end{bmatrix}\right)\chi_{\pi_j}\left(\begin{bmatrix} a&b\\ 0&a^{-1}\end{bmatrix}\right)\right|^2\\
& = & \frac{1}{q^{2n-2k}}\sum_{a\equiv 1, b\equiv 0 \bmod q^{n-k}} \left|q^{n-2k}e^\frac{2\pi ijb}{q^{n}}\right|^2\\
& = & \frac{1}{q^{2n-2k}}q^{2k}q^{2n-4k}\\
& = & 1.\\
\end{eqnarray*}
Varying $j$ and $j'$, we find $q^{2k}-q^{2k-1}$ irreducible representations of dimension $q^{n-2k}$. Note that all of these representations are different.

For every irreducible representation $\pi$ of $B_{k,n-1}$, we can lift this to an irreducible representation $\widetilde{\pi}$ of $B_{k,n}$. We can now consider the (also irreducible and pairwise distinct) representations $\widetilde{\pi}\otimes \rho_j$, for $j\in \{0,1,\ldots,q-1\}$, $\pi$ running through irreducible representations of $B_{k,n-1}$. 
For these representations we have that $\widetilde{\pi}\otimes\rho_j\left(\begin{bmatrix} 1&q^{n-1}\\ 0&1\end{bmatrix}\right) = \Id$, since the matrix $\begin{bmatrix} 1&q^{n-1}\\ 0&1\end{bmatrix}$ lies in $N_{n-1}$ and thus is trivial in $B_{k,n-1}$.

Now we can check if we have found all irreducible representations of $B_{k,n}$:
\begin{eqnarray*}
\sum_{\pi \text{ rep. of} B_{k,n}}\left|\chi_{\pi}( I_2)\right|^2 & = & \sum_{j = 0}^{q^k-1}\sum_{j' \not\equiv 0\bmod q} \left|\chi_{\pi_j\otimes\rho_{j'}}( I_2)\right|^2 + \sum_{j = 0}^{q-1}\sum_{\pi \text{ rep. of }B_{k,n-1}} \left|\chi_{\widetilde{\pi}\otimes\rho_{j}}( I_2)\right|^2\\
& = & \sum_{j = 0}^{q^k-1}\sum_{j' \not\equiv 0 \bmod q} q^{2n-4k} + \sum_{j = 0}^{q-1}\sum_{\pi \text{ rep. of }B_{k,n-1}} \left|\chi_{\widetilde{\pi}}( I_2)\right|^2\\
& = & (q^{2k}-q^{2k-1}) q^{2n-4k} + \sum_{j = 0}^{q-1}q^{2n-2k-2}\\
& = & q^{2n-2k} - q^{2n-2k-1} + q^{2n-2k-1}\\
& = & \left| B_{k,n}\right|.\\
\end{eqnarray*}
Thus we have found all the irreducible representations of $B_{k,n}$. 

By induction we may assume that the proposition is true for $B_{k,n-1}$. 
If $l=0$, then all the irreducible representations of $B_{k,n}$ where the image of $\begin{bmatrix} 1&q^{n-1}\\ 0&1\end{bmatrix}$ is not the identity have dimension $q^{n-2k}$, as they are necessarily those representations not arising as $\widetilde{\pi} \otimes \rho_j$ with $\pi$ an irreducible representation of $B_{k,n-1}$, i.e. they are those representations of the form $\pi_j \otimes \rho_{j'}$ constructed above.

If $l>0$, then all irreducible representations where the image of $\begin{bmatrix} 1&q^{n-l}\\ 0&1\end{bmatrix}$ is the identity, but the image of $\begin{bmatrix} 1&q^{n-l-1}\\ 0&1\end{bmatrix}$ is not, are of the form $\widetilde{\pi}\otimes\rho_{j}$ where $\widetilde{\pi}$ is the lift of an irreducible representation $\pi$ of $B_{k,n-1}$. 
This is because if we consider the other representations, which are of the form $\pi_j\otimes \rho_{j'}$, considering where the vector $\xi_1$ is mapped by $\pi_j\left(\begin{bmatrix} 1&q^{n-l}\\ 0&1\end{bmatrix}\right)$, we see that the image of $\begin{bmatrix} 1&q^{n-l}\\ 0&1\end{bmatrix}$ cannot be equal to the identity.

Now due to the induction hypothesis we have that the dimension of $\pi$ is $q^{(n-1)-2k-(l-1)}=q^{n-2k-l}$. Now the representation $\widetilde{\pi}\otimes\rho_{j}$ has the same dimension, which completes the proof of the lemma.
\end{proof}

\begin{proposition}\label{boundRep}
Let $k,n\in\N$ be such that $3k\le n-1$, then every representation of $\Qo{n}{k}$ that is not the lift of a representation of $\Qo{n-1}{k}$ has dimension at least $q^{n-3k-3}$.
\end{proposition}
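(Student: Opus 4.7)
The approach is to produce a subgroup $\widetilde B \leq G := \Qo{n}{k}$ mapped isomorphically onto $B_{k,n}$ by the natural quotient $\rho : G \to F_3/N_n = \PSL{2}{q^n}$, and then apply Lemma~\ref{allRep} to the restriction of $\pi$ to $\widetilde B$. Let $K$ denote the kernel of $G \to \Qo{n-1}{k}$; since $\pi$ is not a lift, $\pi|_K \neq 1$.

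First I would show $K = (N_{n-1} \cap \Gamma(N_k))/(N_n \cap \Gamma(N_k))$ is isomorphic to $N_{n-1}/N_n \cong \Z_q^3$ by exhibiting preimages in $N_{n-1} \cap \Gamma(N_k)$ of each generator $U, L, D$ of $N_{n-1}/N_n$ from Remark~\ref{MatrixGen}. For $U$ and $L$, the elements $E_{n-2}^q, F_{n-2}^q$ lie in $N_k^q \subseteq \Gamma(N_k)$ (valid since $n - 2 \geq k$); for $D$, Remark~\ref{MatrixGen}'s commutator formula expresses $D$ modulo $N_n$ as the commutator of elements of $N_{k+1}$ and $N_{n-k-2} \subseteq N_k$ (using $n-k-2 \geq k$), which lies in $[N_k, N_k] \subseteq \Gamma(N_k)$. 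Next I would construct the splitting: choose lifts $E_k, T_k \in F_3$ of the generators of the semidirect product $B_{k,n} = \Z_{q^{n-k}} \rtimes \Z_{q^{n-k}}$, and verify in $G$ the defining relations $\widetilde E_k^{q^{n-k}} = \widetilde T_k^{q^{n-k}} = e$ and $\widetilde T_k \widetilde E_k \widetilde T_k^{-1} = \widetilde E_k^{(1+q^k)^2}$. The first two hold because $q$-th powers of elements of $N_k$ lie in $N_k^q \subseteq \Gamma(N_k)$, and these powers also lie in $N_n$ (by the matrix relations in $F_3/N_n$). The third (for $k \geq 1$) follows because the discrepancy $[T_k, E_k] \cdot E_k^{-(2q^k + q^{2k})}$ lies in $N_n$ (the relation holds in $F_3/N_n$) and in $\Gamma(N_k)$ (since $[T_k, E_k] \in [N_k, N_k]$, and $E_k^{-(2q^k + q^{2k})} \in N_k^q$ using the arithmetic fact $q \mid 2q^k + q^{2k}$). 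Thus $\widetilde B := \langle \widetilde E_k, \widetilde T_k \rangle \leq G$ is isomorphic to $B_{k,n}$ via $\rho|_{\widetilde B}$.

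Finally, the element $\widetilde E_{n-1} := \widetilde E_k^{q^{n-1-k}} \in \widetilde B$ lies in $K$ and corresponds under the isomorphism $K \cong N_{n-1}/N_n$ to the generator $U$. The conjugation action of $G$ on $K$ factors through $\PSL{2}{q} = F_3/N_1$ and is the adjoint representation on $\mathfrak{sl}_2(\Z_q)$, which is $\PSL{2}{q}$-irreducible for our odd prime $q \geq 5$. Hence $\ker \pi \cap K$, being a $G$-invariant submodule of $K$, is either $\{0\}$ or all of $K$; were $\pi(\widetilde E_{n-1}) = \Id$, the former would be excluded, forcing $\ker \pi \cap K = K$ and contradicting $\pi|_K \neq 1$. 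Thus $\pi|_{\widetilde B}$ is a representation of $B_{k,n}$ nontrivial on $E_{n-1}$, so it does not factor through $B_{k,n-1}$, and Lemma~\ref{allRep} with $l = 0$ yields an irreducible component of dimension $q^{n-2k} \geq q^{n-3k-3}$. The main obstacle is verifying the splitting relations in the second step---especially the divisibility $q \mid 2q^k + q^{2k}$ needed to place $E_k^{-(2q^k + q^{2k})}$ in $N_k^q$, which requires $k \geq 1$---while the degenerate case $k = 0$ can be handled by instead embedding $B_{1,n}$ into $G$ (for which the analogous discrepancy computation goes through), still yielding a dimension bound at least $q^{n-2}$.
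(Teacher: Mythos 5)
Your proposal is correct, and it takes a genuinely different route from the paper's proof. The paper notes that $\Gamma(N_k)/(N_n\cap\Gamma(N_k))$ maps isomorphically under $\rho\colon\Qo{n}{k}\to F_3/N_n$ onto $N_{k+1}/N_n$ (by \cref{ntok}), so the subgroup $B_{k+1,n}$ sits inside $\Qo{n}{k}$ \emph{for free}; it then combines the commutator identity of \cref{MatrixGen} with the transpose--inverse automorphism to pin down a unipotent matrix on which $\pi$ is nontrivial, and applies \cref{allRep} to $B_{k+1,n}$ with $l=k+1$, yielding the exponent $n-3k-3$. You instead build an explicit splitting over $B_{k,n}$ itself: the key observation, that the relation discrepancies for $\widetilde E_k,\widetilde T_k$ simultaneously lie in $N_n$ (from the matrix identities mod $q^n$) and in $\Gamma(N_k)=N_k^q[N_k,N_k]$ (because they are products of $q$-th powers and commutators of elements of $N_k$), is sound and is not in the paper. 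You then replace the commutator/transpose--inverse case analysis by an irreducibility argument: the conjugation action of $G$ on $K\cong N_{n-1}/N_n\cong\mathfrak{sl}_2(\mathbb{F}_q)$ factors through $\PSL{2}{q}$ as the adjoint representation, which for $q\ge 5$ is the irreducible module $\operatorname{Sym}^2$ of the standard representation; hence $\ker\pi\cap K$ is trivial and $\pi|_K$ is faithful, so $\pi$ is automatically nontrivial on the upper unipotent generator, with no appeal to transpose--inverse or to which of the three generators is hit. Applying \cref{allRep} with $l=0$ to $B_{k,n}$ then gives dimension $q^{n-2k}$, which is strictly stronger than the claimed $q^{n-3k-3}$. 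The trade-offs are clear: your route demands the extra verification that the split really exists (plus the $k=0$ adjustment) and imports a fact from modular representation theory of $\PSL{2}{q}$, while the paper's route is entirely elementary and self-contained but produces a weaker exponent. Both bounds comfortably suffice for the application in \cref{expander}.

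Two small points worth tightening if you write this up: (i) make explicit that $\widetilde B$ is a quotient of the abstract group $\Z_{q^{n-k}}\rtimes\Z_{q^{n-k}}$ presented by the three relations you verify, and that this group has the same order $q^{2(n-k)}$ as $B_{k,n}$, so the surjection $\rho|_{\widetilde B}$ is forced to be an isomorphism; (ii) justify that the lifts can be chosen inside $N_k$ (not just in $F_3$), since that is what makes $q$-th powers land in $N_k^q$ and commutators in $[N_k,N_k]$.
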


\begin{proof}
First note that $\Theta(N_k)/(N_n\cap\Theta(N_k))$ is isomorphic to $N_{k+1}/N_n$: 
\begin{align*}
\Theta(N_k)/(N_n\cap\Theta(N_k)) & \cong (N_n \Theta(N_k))/N_n\\
& \cong N_{k+1}/N_n.
\end{align*}
We have used the second isomorphism theorem and \cref{ntok}. Let us call this isomorphism $\Psi$, 
$$\Psi: \Theta(N_k)/(N_n\cap\Theta(N_k)) \rightarrow N_{k+1}/N_n.$$
We can thus view $N_{k+1}/N_n$ as a subgroup of $\Qo{n}{k}$, via $\Psi$.

Let $\pi$ be a representation of $\Qo{n}{k}$ that is not the lift of a representation of $\Qo{n-1}{k}$. 
This means that $\pi$ is non-trivial on the kernel of the map 
$$\Qo{n}{k} \rightarrow \Qo{n-1}{k}.$$ 
This kernel is equal to $(N_{n-1}\cap \Theta(N_k)) / (N_{n}\cap \Theta(N_k))$. Considering this kernel, we see that it is in fact isomorphic to $N_{n-1}/N_n$:

\begin{align*}
(N_{n-1}\cap \Theta(N_k)) / (N_{n}\cap \Theta(N_k)) &\cong (N_{n-1}\cap \Theta(N_k)) / (N_{n}\cap(N_{n-1} \cap \Theta(N_k)))\\
& \cong ((N_{n-1}\cap\Theta(N_k))N_n) / N_n\\
&\cong (N_{n-1}N_n \cap \Theta(N_k)N_n) / N_n\\
& \cong (N_{n-1} \cap N_{k+1}) / N_n\\
& \cong N_{n-1}/N_n.
\end{align*}
Here, we have used the fact that the $N_i$ are nested, the second isomorphism theorem, \cref{ntok}, and that $n$ is sufficiently larger than $k$.
Let us call this isomorphism $\Phi$,
$$\Phi: (N_{n-1}\cap \Theta(N_k)) / (N_{n}\cap \Theta(N_k)) \rightarrow N_{n-1}/N_n.$$
Now the isomorphisms $\Psi$ and $\Phi$ are compatible, in the sense that $\Phi$ is just a restriction of $\Psi$.
This means that when we restrict the representation $\pi$ to $N_{k+1}/N_n$ (viewed as a as a subgroup of $\Qo{n}{k}$, via $\Psi$), this restriction is  non-trivial on $N_{n-1}/N_n$ as $\pi$ is not a lift.
This implies that at least one of the following elements of $N_{n-1}/N_n$ has an image under $\pi$ that is not the identity: 
$$\begin{bmatrix} 1+q^{n-1}& 0\\ 0&1-q^{n-1}\end{bmatrix},\begin{bmatrix} 1&q^{n-1}\\ 0&1\end{bmatrix}, \begin{bmatrix} 1&0\\ q^{n-1}&1\end{bmatrix}.$$
This is because, as in \cref{MatrixGen}, these matrices generate $N_{n-1}/N_n$. 
The matrix $\begin{bmatrix} 1+q^{n-1}& 0\\ 0&1-q^{n-1}\end{bmatrix}$ is equivalent to $\begin{bmatrix} q^{2n-2}+q^{n-1}+1&-q^{n+k}\\ q^{2n-k-3}&-q^{n-1}+1\end{bmatrix}$, which is the commutator of $\begin{bmatrix} 1&q^{k+1}\\ 0&1\end{bmatrix}$ and $\begin{bmatrix} 1&0\\ q^{n-k-2}&1\end{bmatrix}$, as we have seen in \cref{MatrixGen}, and so the images of both of these must be non-trivial, if the image of their commutator is non-trivial.
The transpose-inverse map is an automorphism, and thus, we may assume without loss of generality that $\pi\left(\begin{bmatrix} 1&q^{n-k-2}\\ 0&1\end{bmatrix}\right)\neq\Id$ (since one of the other two generators have non-trivial images, this also implies that this matrix has a non-trivial image).

Let $B$ be the subgroup corresponding to upper triangular matrices of $N_{k+1}/N_n$ under the isomorphism $\Psi$ between $\Theta(N_k)/(N_n\cap\Theta(N_k))$ and $N_{k+1}/N_n$.
Due to \cref{allRep} we know that $\pi\vert_{B}$ contains a representation of dimension at least $q^{n-3k-3}$ (considering $B_{k+1,n}$ and $l=k+1$). 
Thus we can conclude that $\pi$ has dimension at least $q^{n-3k-3}$.
\end{proof}

\section{Box spaces of the free group}\label{Voila}
In this section we will prove that there exist box spaces of the free group $F_3$ that do not coarsely embed into a Hilbert space, but do not contain weakly embedded expanders either.
To do so we will use the following diagram, made up of quotients of the free group $F_3$ by intersections of the subgroups $N_i$ with the subgroups $\Theta(N_k)$ coming from the $q$-homology covers of the $F_3/N_k$ (see Figure 1).
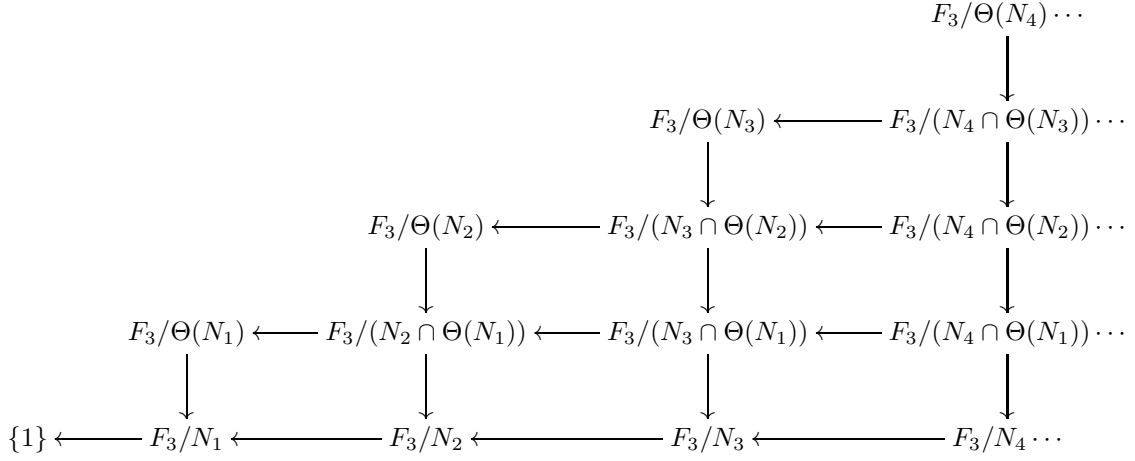
\begin{figure}\label{MagicTriangle}
\xymatrix{
 &  & & & F_3/\Theta(N_4)\cdots\ar[d]\\
 &  &  & F_3/\Theta(N_3)\ar[d] & \Qo{4}{3}\cdots\ar[d]\ar[l]\\
 &  & F_3/\Theta(N_2)\ar[d] & \Qo{3}{2}\ar[d]\ar[l] & \Qo{4}{2}\cdots\ar[d]\ar[l]\\
 & F_3/\Theta(N_1)\ar[d] & \Qo{2}{1}\ar[d]\ar[l] & \Qo{3}{1}\ar[d]\ar[l] & \Qo{4}{1}\cdots\ar[d]\ar[l]\\
\{1\} & F_3/N_1\ar[l] & F_3/N_2\ar[l] & F_3/N_3\ar[l] & F_3/N_4\cdots\ar[l]\\
}
\caption{The magic triangle}
\end{figure}

Note that the quotients $F_3/N_i$ appearing along the bottom row are expanders by \cref{isoSL} and the result of Lubotzky (\cref{LubRam}). 

Set $f_{n,k}(m) = \#\{g\in N_n\cap\Theta(N_k) : |g|\le m\}$ and set $A_{n,k} = [F_3:(N_n\cap\Theta(N_k))]$.

\begin{lemma}\label{sqrt}
If $a^2\equiv b^2\bmod q^n$ and $q\nmid b$, then $a\equiv\pm b\bmod q^{n}$.
\end{lemma}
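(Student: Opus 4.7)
The plan is to factor $a^2 - b^2$ as $(a-b)(a+b)$ and use the primality of $q$ together with the hypothesis $q \nmid b$ to conclude that all the $q$-adic content must sit in a single factor.

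First I would observe that the hypothesis $a^2 \equiv b^2 \bmod q^n$ is equivalent to $q^n \mid (a-b)(a+b)$. The key claim to establish is that $q$ cannot simultaneously divide $a-b$ and $a+b$: indeed, if it did, then $q$ would divide their sum $(a-b)+(a+b) = 2a$ and their difference $(a+b)-(a-b) = 2b$; since $q$ is odd (it is an odd prime by the standing assumption on $q$ in this paper), this would force $q \mid b$, contradicting the hypothesis $q \nmid b$.

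Next, since $q$ is prime and divides at most one of $a-b$ or $a+b$, unique factorization in $\mathbb{Z}$ (or equivalently the fact that $q^n$ and the coprime factor have no common prime divisor) forces the entire factor $q^n$ to divide a single one of them. That is, $q^n \mid a-b$ or $q^n \mid a+b$, which is exactly the conclusion $a \equiv \pm b \bmod q^n$.

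This is a very short routine number-theoretic lemma; the only real \emph{step} is the parity argument that rules out $q$ dividing both $a-b$ and $a+b$, and this is immediate from $q$ being odd. There is no anticipated obstacle — the lemma is being stated here only to be applied later in the paper (likely in counting square roots or controlling lifts in $\mathbb{Z}/q^n\mathbb{Z}$).
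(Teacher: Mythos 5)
Your argument is correct, and it takes a genuinely different route from the paper. You factor $a^2-b^2=(a-b)(a+b)$, observe that $q$ cannot divide both factors (else $q\mid 2b$, impossible since $q$ is odd and $q\nmid b$), and conclude by primality that the full power $q^n$ must sit in a single factor. The paper instead proves the lemma by induction on $n$: the base case $n=2$ is deferred to an exercise in the Davidoff--Sarnak--Valette book, and the inductive step writes $a\equiv cq^{n-1}\pm b\bmod q^n$, expands $a^2$, and deduces $q\mid c$ from $q\mid 2cb$. Your direct factorization is shorter, entirely self-contained (no external exercise needed), and makes the role of the two hypotheses ($q$ odd prime, $q\nmid b$) more transparent in one shot; the paper's inductive phrasing is closer in spirit to a Hensel-type lift and matches the style of \cref{QuadRes} earlier in the same section, but buys nothing here that your version does not. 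One small point worth stating explicitly in your writeup: for $n\geq 1$, the case where $q$ divides \emph{neither} factor is also ruled out, since then $q\nmid(a-b)(a+b)$ contradicts $q^n\mid(a-b)(a+b)$; so $q$ divides exactly one factor, and $q^n$ divides that one.
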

\begin{proof}
We will prove this lemma by induction. For $n=2$ the lemma follows from Exercise 1 in Section 4.3 of \cite{RamGr}: given that $a^2\equiv b^2\bmod q^2$, we have $a^2-b^2\equiv (a-b)(a+b)\equiv 0\bmod q^2$; this implies that either $a-b\equiv 0\bmod q^2$ or $a+b\equiv 0\bmod q^2$, since it cannot be that both $a-b$ and $a+b$ are divisible by $q$, as that would imply that $q$ divides $b$. 
For bigger $n$, we have that $a^2\equiv b^2 \bmod q^n$ implies $a^2\equiv b^2 \bmod q^{n-1}$, so by induction we have that $a\equiv\pm b\bmod q^{n-1}$. Therefore there exists a $c\in\Z_q$ such that $a\equiv cq^{n-1}\pm b$ modulo $q^n$.

Now it suffices to show that $c\equiv 0\bmod q$. We have that $b^2\equiv a^2\equiv b^2\pm2cbq^{n-1} \bmod q^n$, so $q\mid 2cb$. As $q$ is prime, either $q\mid c$ or $q\mid 2b$. As $q\nmid b$, we have that $q\mid c$ and therefore $a\equiv\pm b\bmod q^{n}$.
\end{proof}

\begin{lemma}\label{boundLoops}
For any $k,n,m\in\N$ with $m$ even, we have $f_{n,k}(m) = \Ob{\frac{\p^{\frac{13}{12}m}}{q^{3n}}+\frac{\p^{\frac{7}{12}m}}{q^n}}$.
\end{lemma}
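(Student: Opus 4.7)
The plan is to use the quaternion realization of $F_3$ from Proposition \ref{F2} and Theorem \ref{LubRam} to translate the counting problem into a classical Diophantine estimate on sums of squares. Since $N_n\cap\Gamma(N_k)\subset N_n$, we have $f_{n,k}(m)\le\#\{g\in N_n : |g|\le m\}$, so the $\Gamma(N_k)$ condition can be dropped for the upper bound. Each $g\in F_3=\Lambda(2)$ of length $\ell$ corresponds (up to the equivalence $\sim$) to an integer quaternion $\alpha=x_0+x_1 i+x_2 j+x_3 k$ with $N(\alpha)=\p^\ell$, since each generator has norm $\p$. By computing the image of $\alpha$ under the matrix realization of Theorem \ref{LubRam} entry by entry, one sees that $g\in N_n$ forces $x_1\equiv x_2\equiv x_3\equiv 0\bmod q^n$.

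Writing $x_i=q^n y_i$ for $i=1,2,3$, I am reduced to bounding the number of integer solutions of $x_0^2+q^{2n}(y_1^2+y_2^2+y_3^2)=\p^{m'}$ for $m'\le m$, subject to $|x_0|\le\p^{m'/2}$ and $x_0^2\equiv\p^{m'}\bmod q^{2n}$. By Proposition \ref{QuadRes}, $\p$ is a quadratic residue modulo $q^{2n}$, hence so is $\p^{m'}$; Lemma \ref{sqrt} then shows that any admissible $x_0$ lies in at most two residue classes modulo $q^{2n}$, so the total number of such $x_0$ is $\Ob{\p^{m'/2}/q^{2n}+1}$.

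For each admissible $x_0$, the number of triples $(y_1,y_2,y_3)\in\Z^3$ to be counted equals $r_3\bigl((\p^{m'}-x_0^2)/q^{2n}\bigr)$, where $r_3(N)$ is the number of ordered representations of $N$ as a sum of three squares. I will invoke the classical estimate $r_3(N)=\Ob{N^{1/2+\epsilon}}$ with $\epsilon=1/12$, which bounds this by $\Ob{(\p^{m'}/q^{2n})^{7/12}}$. Multiplying the two estimates and simplifying exponents:
\[\left(\frac{\p^{m'/2}}{q^{2n}}+1\right)\cdot\frac{\p^{7m'/12}}{q^{7n/6}}=\frac{\p^{13m'/12}}{q^{19n/6}}+\frac{\p^{7m'/12}}{q^{7n/6}}\le\frac{\p^{13m'/12}}{q^{3n}}+\frac{\p^{7m'/12}}{q^{n}},\]
where the last step uses $19n/6\ge 3n$ and $7n/6\ge n$. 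Summing over $m'\le m$ is a geometric series in $\p$ dominated by its $m'=m$ term and yields the claimed bound.

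The main obstacle is the careful algebraic step translating the membership condition $g\in N_n$ into the divisibility condition $q^n\mid x_1,x_2,x_3$ via the explicit $\PSL{2}{q^n}$-matrix representation of Theorem \ref{LubRam}, together with honest bookkeeping of the equivalence $\sim$ (which introduces only a polynomial-in-$m$ overcount, absorbed into the $\Ob{\cdot}$).
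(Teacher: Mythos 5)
Your argument is correct and follows essentially the same route as the paper: drop the $\Gamma(N_k)$ constraint, translate membership in $N_n = \Gamma(2q^n)$ into the divisibility $q^n \mid x_1,x_2,x_3$, bound the choices for the scalar part via \cref{sqrt}, and apply the $r_3(N)=\Ob{N^{1/2+\varepsilon}}$ estimate with $\varepsilon=1/12$. The only cosmetic difference is that you sum the resulting bound over $m'\le m$ as a geometric series, whereas the paper works directly with the count at $N(\alpha)=5^m$; both yield the same final exponents.
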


\begin{proof}
Clearly it suffices to prove the lemma for $k=0$.
For $k=0$, $N_0$ is the whole of the free group $F_3$, and so we are looking to bound the quantity $f_{n,0}(m)=\#\{g\in N_n\cap\Theta(F_3) : |g|\le m\}$. We note that
$$\#\{g\in N_n\cap\Theta(F_3) : |g|\le m\} \leq \#\{g\in N_n : |g|\le m\}.$$
We thus need to estimate from above the number of reduced words of length at most $m$ in $N_n$.

By Corollary 2.6.14 of \cite{RamGr}, for $\alpha \in\mathbb{H}(\Z)$ with $N(\alpha)=5^m$ such that the image of $\alpha$ in $\mathbb{H}(\Z)/\sim$ (which we will denote here by $[\alpha]$) is in $F_3$, there is a unique factorization of the form $\alpha=\pm 5^r w_{m-2r}$, where $w_{m-2r}$ is a reduced word of length $m-2r$ in the elements of the generating set $S_5$. 
Thus in $\mathbb{H}(\Z)/\sim$, such an $[\alpha]$ in $N_n$ will be a reduced word of length $m-2r$, and moreover, each such word corresponds to two quaternions of norm $5^m$ (see Lemma 4.4.2 of \cite{RamGr}). We therefore have 
$$\#\left\{\alpha \in\mathbb{H}(\Z) \mid [\alpha]\in N_n, N(\alpha) = \p^m\right\}=2\#\{g\in N_n : |g|\le m\}.$$

We thus obtain
$$f_{n,0}(m) \leq \#\left\{\alpha \in\mathbb{H}(\Z) \mid [\alpha]\in N_n, N(\alpha) = \p^m\right\} = \#\left\{a + q^n(bi + cj + dk) \mid a^2 + q^{2n}(b^2 + c^2 + d^2) = \p^m\right\}.$$
Now $a^2\equiv \p^m \bmod q^{2n}$, so due to \cref{sqrt} we have $a\equiv \pm \p^\frac{m}{2} \bmod q^{2n}$. This leaves at most $\frac{4\cdot \p^\frac{m}{2}}{q^{2n}}+2$ possibilities for $a$.

Now due to Corollary 2.2.13 of \cite{RamGr} we know that for any fixed $\varepsilon>0$ we have that $\#\{(a,b,c) \mid a^2+b^2+c^2 = x\} = \Ob{x^{\frac{1}{2}+\varepsilon}}$. So we find a bound for $f_{n,0}(m)$:
\begin{eqnarray*}
f_{n,0}(m)  & \le & \sum_{a} \Ob{\left( \frac{5^m-a^2}{q^{2n}}  \right)^{\frac{1}{2}+\varepsilon} }\\
& \le & \left(\frac{4\cdot \p^\frac{m}{2}}{q^{2n}}+2\right) \Ob{\left(\frac{\p^m}{q^{2n}}\right)^{\frac{1}{2}+\varepsilon}}\\
& \le & \Ob{\left(\frac{4\cdot \p^\frac{m}{2}}{q^{2n}}+2\right)\left(\frac{\p^{m(\frac{1}{2}+\varepsilon)}}{q^{n}}\right)}\\
& = & \Ob{\frac{\p^{m(1+\varepsilon)}}{q^{3n}}+\frac{\p^{m(\frac{1}{2}+\varepsilon)}}{q^{n}}}.
\end{eqnarray*}
Now for $\varepsilon = \frac{1}{12}$ we find $f_{n,k}(m) = \displaystyle\Ob{\frac{\p^{\frac{13}{12}m}}{q^{3n}}+\frac{\p^{\frac{7}{12}m}}{q^n}}$.\\
\end{proof}

\begin{theorem}\label{expander}
There exists $N>0$ such that for every $k,n\in\N$ with $n\ge N$, $18<18(k+1)\le n$ and $A_{n,k}\le q^{\frac{19}{6}n}$, we have that every eigenvalue $\lambda$ of the adjacency operator $A$ of $\Qo{n}{k}$ such that some corresponding eigenvector is not the lift of an eigenvector of the adjacency operator of $\Qo{n-1}{k}$ satisfies $\lambda\le \p^\frac{71}{72}+\p^\frac{1}{72}<6$.
\end{theorem}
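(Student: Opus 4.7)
The plan is to combine the quaternion loop bound of Lemma~\ref{boundLoops} with a standard walk estimate in the $6$-regular tree (the Cayley graph of $F_3$) and the representation-theoretic multiplicity bound of Proposition~\ref{boundRep}. Setting $K = N_n \cap \Gamma(N_k)$, we have $\operatorname{tr}(A^{2m}) = A_{n,k}\cdot c_{2m}$, where $c_{2m}$ is the number of closed walks of length $2m$ at the identity of $\Qo{n}{k}$, equivalently the number of formal length-$2m$ words in the six generators of $F_3$ whose image in $F_3$ lies in $K$. Grouping such words by the reduced length $r$ of the element of $K$ they represent, and letting $a_{2m}(r)$ denote the number of walks of length $2m$ from the root to a fixed vertex at distance $r$ in the $6$-regular tree, we have $c_{2m} = \sum_{g\in K} a_{2m}(|g|)$.

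The key combinatorial input is the standard tree-walk bound $a_{2m}(r) \le \binom{2m}{m-r/2}\p^{(2m-r)/2}$, which follows from decomposing a walk into its canonical path and side-branch excursions. Combining it with $\#\{g\in K : |g|=r\} \le f_{n,k}(r)$ from Lemma~\ref{boundLoops} and collapsing via $\sum_{s=0}^{2m}\binom{2m}{s} x^s = (1+x)^{2m}$, the exponents simplify cleanly to yield
\[
c_{2m} \le C\left[\frac{(\p^{13/12}+\p^{-1/12})^{2m}}{q^{3n}} + \frac{(\p^{7/12}+\p^{5/12})^{2m}}{q^n}\right].
\]
On the other hand, Proposition~\ref{boundRep} together with the hypothesis $18(k+1)\le n$ (which forces $n-3k-3 \ge 5n/6$) implies that every eigenvalue $\lambda$ of $A$ admitting a non-lift eigenvector arises from an irreducible representation of dimension at least $q^{5n/6}$, and hence appears in $\ell^2(\Qo{n}{k})$ with multiplicity at least $q^{5n/6}$. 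Combined with the hypothesis $A_{n,k}\le q^{19n/6}$, this produces
\[
\lambda^{2m} \le \frac{A_{n,k}}{q^{5n/6}}\cdot c_{2m} \le C\cdot q^{7n/3}\left[\frac{(\p^{13/12}+\p^{-1/12})^{2m}}{q^{3n}} + \frac{(\p^{7/12}+\p^{5/12})^{2m}}{q^n}\right].
\]

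Finally one optimizes over $m$. Choosing $2m$ to grow linearly in $n$ at the rate that balances the two bracketed terms against the prefactor $q^{7n/3}$, taking $2m$-th roots and letting $n\to\infty$, yields the bound $\lambda \le \p^{71/72}+\p^{1/72}$. The constant $N$ in the statement absorbs the lower-order corrections from finite $m$, and the strict inequality $\p^{71/72}+\p^{1/72}<6$ is an elementary numerical check. The main obstacle is the sharpness of the tree-walk estimate: the trivial spectral-radius bound $a_{2m}(r) \le (2\sqrt{\p})^{2m}$ discards the $r$-dependence and produces a bound exceeding $6$; retaining the binomial decay factor $\binom{2m}{m-r/2}$ is essential so that the growth of $f_{n,k}(r)$ in $r$ does not swamp the estimate.
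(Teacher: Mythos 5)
Your approach is genuinely different from the paper's. Davidoff--Sarnak--Valette style, the paper feeds the loop count $f_{n,k}(m)$ directly into the exact trace formula
\[
f_{n,k}(m)\ \ge\ \frac{\p^{m/2}}{A_{n,k}}\sum_{j}\frac{\sin(m+1)\theta_j}{\sin\theta_j},\qquad \mu_j=2\sqrt{\p}\cos\theta_j,
\]
separates the contribution of ``tempered'' eigenvalues ($|\mu_j|\le 2\sqrt{\p}$) from a putative large eigenvalue $\lambda=2\sqrt{\p}\cosh\psi_l$, uses $\sinh(m+1)\psi_l/\sinh\psi_l\ge e^{m\psi_l}$, and bounds $\psi_l$. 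You instead run a pure moment method on $\operatorname{tr}(A^{2m})=A_{n,k}\,c_{2m}$, decomposing $c_{2m}=\sum_{g\in K}a_{2m}(|g|)$, bounding $a_{2m}(r)$ by a binomial expression, and summing against $f_{n,k}(r)$ with the binomial theorem. Both routes ultimately play the loop bound of \cref{boundLoops} against the multiplicity bound from \cref{boundRep}, so they are siblings; yours avoids the Chebyshev machinery at the cost of needing a sharp tree-walk estimate. Your method is valid and, if you actually optimize the balance between the two bracketed terms, the bound you obtain is $(\p^{13/12}+\p^{-1/12})^{2/3}(\p^{7/12}+\p^{5/12})^{1/3}\approx 5.81$, which is slightly \emph{better} than the paper's $\p^{71/72}+\p^{1/72}\approx 5.91$ (not equal to it, as you claim), but in any case $<6$, which is all that is used downstream.

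The one place you must be careful is the tree-walk inequality $a_{2m}(r)\le\binom{2m}{m-r/2}\,\p^{m-r/2}$. Your justification --- ``decomposing a walk into its canonical path and side-branch excursions'' --- is exactly the kind of argument that, if carried out naively (lattice path times $\p+1$ choices for each ``away'' step, since from the base vertex one has $\p+1$ outgoing directions), yields the weaker bound $\binom{2m}{m-r/2}(\p+1)^{m-r/2}$. That extra factor $\bigl(\frac{\p+1}{\p}\bigr)^{m-r/2}$ is not harmless here: running your computation with $\p+1$ in place of $\p$ replaces $\p^{13/12}+\p^{-1/12}$ by $\p^{13/12}+(\p+1)\p^{-13/12}\approx 6.77$ and $\p^{7/12}+\p^{5/12}$ by $\p^{7/12}+(\p+1)\p^{-7/12}\approx 4.90$, and the balanced bound becomes $\approx 6.08>6$, which destroys the spectral gap. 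So the sharper bound with $\p$ is essential. It is true, but it does not follow from a direct ``one factor of $\p$ per excursion step'' count; one way to see it is via the Green's function of the $(\p+1)$-regular tree, $G(z)=\frac{2\p}{(\p-1)+(\p+1)\sqrt{1-4\p z^2}}$, which is dominated coefficientwise by $(1-4\p z^2)^{-1/2}=\sum_m\binom{2m}{m}\p^m z^{2m}$ (the difference is $\frac{(\p-1)(1-\sqrt{1-4\p z^2})}{\sqrt{1-4\p z^2}\,[(\p-1)+(\p+1)\sqrt{1-4\p z^2}]}$, a product of series with nonnegative coefficients); multiplying by $F(z)^r=z^rC(\p z^2)^r$ and extracting coefficients gives exactly your inequality. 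If you include such a justification, the proof goes through.
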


\begin{proof}

Without loss of generality we may assume that $\lambda\ge 2\sqrt{\p}$.
Take $\theta_j$ such that $\mu_j = 2\sqrt{\p}\cos(\theta_j)$, where $\mu_j$ are the eigenvalues of the adjacency operator $A$. 
As shown in Lemma 4.4.2 of \cite{RamGr}, $f_{n,k}(m)$ is the same as the number of non-backtracking paths from the identity to itself of length at most $m$ in $F_3/N_n$. 
Thus, due to the trace formula given in Corollary 1.4.7 of \cite{RamGr}, we have
\[f_{n,k}(m) = \frac{1}{A_{n,k}} \, \p^\frac{m}{2}\sum_{j=0}^{A_{n,k}-1}\frac{\sin (m+1)\theta_j}{\sin \theta_j}.\]
Take $\psi_j=i\theta_j$. If $|\mu_j|\le 2\sqrt{\p}$, then $\theta_j$ is real and $\left|\frac{\sin (m+1)\theta_j}{\sin \theta_j}\right|\le(m+1)$, and if $|\mu_j|\ge 2\sqrt{\p}$, then $\psi_j$ is real and $\frac{\sin (m+1)\theta_j}{\sin \theta_j} = \frac{\sinh (m+1)\psi_j}{\sinh \psi_j}\ge 0$. So we find the following inequality for any $l$, and in particular for $\mu_l=\lambda$:
\[\frac{A_{n,k}}{\p^\frac{m}{2}}f_{n,k}(m)= \sum_{j=0}^{A_{n,k}-1}\frac{\sin (m+1)\theta_j}{\sin \theta_j}\ge M(\lambda)\frac{\sinh (m+1)\psi_l}{\sinh \psi_l}-(m+1)A_{n,k},\]
where $M(\lambda)$ denotes the multiplicity of the eigenvalue $\lambda$. 
When we take $m$ to be the biggest even integer such that $\p^\frac{m}{2}\le q^{3n}$, we can use \cref{boundLoops} and the fact that we chose $A_{n,k}\le q^{\frac{19}{6}n}$ to obtain the following:
\begin{eqnarray*}
(m+1)A_{n,k}+\frac{A_{n,k}}{ \p^\frac{m}{2}}f_{n,k}(m) & \le & q^{\frac{19}{6}n}\left(m+1 + \Ob{\frac{\p^{\frac{7}{12}m}}{q^{3n}}+\frac{\p^{\frac{1}{12}m}}{q^n}}\right)\\
& \le & q^{\frac{19}{6}n}\left(6n\log_5(q)+1 + \Ob{\frac{q^{\frac{7}{2}n}}{q^{3n}} + \frac{q^{\frac{1}{2}n}}{q^n}}\right)\\
& \le & q^{\frac{19}{6}n}\left(6n\log_5(q)+1 + \Ob{q^{\frac{1}{2}n}+q^{\frac{-1}{2}n}}\right)\\
& = & \Ob{q^{\frac{22}{6}n}}.\\
\end{eqnarray*}

Let $V_\lambda$ be the eigenspace of $A$ corresponding to $\lambda$ on $F_3/(N_n\cap\Theta(N_k))$. Note that $V_{\lambda}$ is a representation space of the group $F_3/(N_n\cap\Theta(N_k))$ (see for example Exercise 4 in Section 4.1 of \cite{RamGr}).
Since some eigenvector is not a lift from $F_3/(N_{n-1}\cap\Theta(N_k))$, the representation of $F_3/(N_n\cap\Theta(N_k))$ on $V_\lambda$ is not a lift from a representation of $F_3/(N_{n-1}\cap\Theta(N_k))$.
We thus have $M(\lambda)\ge q^{n-3k-3}$ due to \cref{boundRep}.

We also have
\[\frac{\sinh (m+1)\psi_l}{\sinh \psi_l}\ge\frac{e^{(m+1)|\psi_l|}}{e^{|\psi_l|}}>e^{(6n\log_\p(q) - 2)|\psi_l|}=\frac{q^{\frac{6n}{\log(\p)}|\psi_l|}}{e^{- 2|\psi_l|}}.\]
We assumed $\lambda\ge 2\sqrt{\p}$, so $\psi_l\ge 0$.
As $e^{2\psi_l}$ is bounded by $e^{\sqrt{\p}}$ we have the following:
\[q^{n-3k-3+\frac{6n}{\log(\p)}\psi_l}\le e^{\sqrt{\p}} M(\lambda)\frac{\sinh (m+1)\psi_l}{\sinh \psi_l} = \Ob{q^{\frac{22}{6}n}}\]
So for big $n$ we find $n-3k-3+\frac{6n}{\log(\p)}\psi_l \le \frac{45}{12}n$. As $18(k+1)\le n$ we see that $n-\frac{n}{6}+\frac{6n}{\log(\p)}\psi_l \le \frac{45}{12}n$. So $\frac{6}{\log(\p)}\psi_l \le \frac{35}{12}$ and therefore $\psi_l \le \frac{35}{72}\log(\p)$. Now we can compute $\lambda$ as follows:
\[\lambda = 2\sqrt{\p}\cos(\theta_l) = 2\sqrt{\p}\cosh(\psi_l) \le \sqrt{\p}\left(\p^\frac{35}{72}+\p^\frac{-35}{72}\right) = \p^\frac{71}{72}+\p^\frac{1}{72}<6.\]
This proves the theorem.
\end{proof}

\begin{corollary}\label{Result}
Let $k_i$ and $n_i$ be non-decreasing sequences in $\N$ with $n_i$ increasing, $18<18(k_i+1)\le n_i$ and $A_{n_i,k_i}\le q^{\frac{19}{6}n_i}$. Then $\Box_{N_{n_i}\cap\Theta(N_{k_i})} F_3$ does not coarsely embed into a Hilbert space.
\end{corollary}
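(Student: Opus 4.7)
The plan is to apply Proposition \ref{genExp} to the sequence $G_i := F_3/(N_{n_i}\cap\Gamma(N_{k_i}))$ of Cayley graphs, taking as quotients $H_i := F_3/(N_{n_i-1}\cap\Gamma(N_{k_i}))$, and thereby realize the components of $\Box_{N_{n_i}\cap\Gamma(N_{k_i})} F_3$ as a generalized expander. By the characterization of non-embeddability recalled at the end of Section \ref{GenExpSec} (Tessera \cite{Tes}), this will rule out a coarse embedding of the box space into a Hilbert space. The hypotheses $18<18(k_i+1)\le n_i$ and $A_{n_i,k_i}\le q^{19n_i/6}$ of the corollary are tailored so that, for all $i$ large enough (specifically once $n_i\ge N$, where $N$ is the threshold from Theorem \ref{expander}), Theorem \ref{expander} applies directly to the pair $(n_i,k_i)$.

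The first hypothesis of Proposition \ref{genExp} requires the kernel of $G_i\to H_i$ to be non-trivial and to avoid a ball in $G_i$ of some growing radius $r_i$. The second isomorphism theorem together with Proposition \ref{ntok}, exactly as in the proof of Proposition \ref{boundRep}, identifies this kernel with $N_{n_i-1}/N_{n_i}$, which is $\Z_q^3$ by Corollary \ref{abelian}, and so in particular non-trivial. For the ball condition, since $\{N_m\}$ is a filtration of $F_3$ the girth of the Cayley graph of $F_3/N_{n_i-1}$ tends to infinity with $i$; choosing $r_i$ equal to, say, half this girth makes every element of $N_{n_i-1}$ of $F_3$-length at most $r_i$ trivial, hence every non-trivial element of the kernel has distance greater than $r_i$ from the identity in $G_i$, with $r_i\to\infty$. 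The second hypothesis is the uniform spectral gap on non-lifted eigenvectors: Theorem \ref{expander} bounds every adjacency eigenvalue of $G_i$ arising from an eigenvector that is not the lift of one on $H_i$ by $5^{71/72}+5^{1/72}<6$, and since the Cayley graphs are $6$-regular the corresponding Laplacian eigenvalue (via $\Delta=6\,\Id-A$) is at least $\varepsilon:=6-(5^{71/72}+5^{1/72})>0$, uniformly in $i$.

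Once both hypotheses are in place, Proposition \ref{genExp} will give that $(G_i)$ is a generalized expander. Because the $G_i$ are precisely the components of $\Box_{N_{n_i}\cap\Gamma(N_{k_i})} F_3$, this exhibits a coarsely embedded generalized expander inside the box space, and Tessera's theorem yields the non-embeddability conclusion. I do not expect a serious obstacle here: the substantial analytic work has already been done in Theorem \ref{expander}, and what remains is essentially bookkeeping. The only mild subtlety is verifying the ball/girth condition with $r_i\to\infty$, but this is an immediate consequence of the filtration property $\bigcap_m N_m=\{1\}$ combined with $n_i\to\infty$.
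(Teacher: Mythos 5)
Your proposal is correct and follows essentially the same route as the paper's proof: apply Proposition~\ref{genExp} to $G_i=F_3/(N_{n_i}\cap\Gamma(N_{k_i}))$ with quotients $H_i=F_3/(N_{n_i-1}\cap\Gamma(N_{k_i}))$, using Theorem~\ref{expander} with $\Delta=6\,\Id-A$ for the uniform spectral gap, the isomorphism $\Phi$ with Corollary~\ref{abelian} for non-triviality of the kernel, and the $F_3$-girth/filtration property for the ball condition $r_i\to\infty$, restricting to the tail $n_i\ge N$ before invoking Tessera's characterization. The girth phrasing differs slightly from the paper's ``looks like $F_3$ on large balls,'' but the two are the same observation.
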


\begin{proof}
We want to apply \cref{genExp}, so we need to check that all the hypotheses hold.

Due to \cref{expander} we know there exists an $N>0$ such that for all $n_i\ge N$, for eigenvalues $\lambda$ of the adjacency operator $A$ of $\Qo{n_i}{k_i}$ such that the corresponding eigenvector is not the lift of an eigenvector of the adjacency operator of $\Qo{n_i-1}{k_i}$, we have that $\lambda\le \p^\frac{35}{36}+\p^\frac{1}{36}$.

Since the Laplacian $\Delta$ is in this case equal to $6\Id - A$ we have that every non-trivial eigenvalue of the Laplacian is greater than $6-\p^\frac{35}{36}-\p^\frac{1}{36}$. 
The quotients $\Qo{n_i-1}{k_i}$ and $\Qo{n_i}{k_i}$ look like $F_3$ (and thus like each other) on bigger and bigger balls, so there exists a sequence $r_i$ such that $r_i\to\infty$ as $i\to\infty$ with $$B(e,r_i)\cap\Big(N_{n_i-1}\cap\Theta(N_{k_i}))/(N_{n_i}\cap\Theta(N_{k_i})\Big)=\{e\},$$ where $B(e,r_i)$ denotes the ball of radius $r_i$ about the identity in $\Qo{n_i}{k_i}$. 
But on the other hand, due to the isomorphism $\Phi$ given as part of the proof of \cref{boundRep}, and \cref{abelian}, we have $N_{n_i-1}\cap\Theta(N_{k_i})\neq N_{n_i}\cap\Theta(N_{k_i})$, since $(N_{n_i-1}\cap\Theta(N_{k_i}))/ (N_{n_i}\cap\Theta(N_{k_i})) \cong N_{n_i -1}/N_{n_i} \cong \mathbb{Z}^3_q$.

Now \cref{genExp} can be applied to the subsequence of $\Qo{n_i}{k_i}$ with $n_i\ge N$. So $\Box_{N_{n_i}\cap\Theta(N_{k_i})} F_3$ contains a generalized expander and therefore does not coarsely embed into a Hilbert space, by the characterization of Tessera \cite{Tes}.
\end{proof}

The \hyperlink{Main}{Main Theorem} now follows from the following result. 

\begin{theorem}\label{MainResult}
There exist increasing sequences $k_i$ and $n_i$ in $\N$ such that $18<18(k_i+1)\le n_i$ and $A_{n_i,k_i}\le q^{\frac{19}{6}n_i}$, and for such $n_i, k_i$, the box space $\Box_{N_{n_i}\cap\Theta(N_{k_i})} F_3$ does not coarsely embed into a Hilbert space, but does not contain weakly embedded expanders.
\end{theorem}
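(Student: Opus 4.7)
The plan is to combine \cref{Result} and \cref{NoExp}: choose $(k_i,n_i)$ satisfying the hypotheses of \cref{Result} (which immediately yields that $\Box_{N_{n_i}\cap\Gamma(N_{k_i})}F_3$ does not coarsely embed into a Hilbert space), and arrange that $k_i\to\infty$ so that the subgroups $\Gamma(N_{k_i})$ themselves form a filtration of $F_3$. In that case \cref{NoExp} applies with $N_i=\Gamma(N_{k_i})$ and $M_i=N_{n_i}\cap\Gamma(N_{k_i})$: the box space $\Box_{\Gamma(N_{k_i})}F_3$ consists of $q$-homology covers of the $F_3/N_{k_i}$ and coarsely embeds into a Hilbert space by \cite{Kh13} (generalising \cite{AGS}), since the girth of $F_3/N_{k_i}$ tends to infinity because $\{N_i\}$ is a filtration of $F_3$. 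Consequently $\Box_{N_{n_i}\cap\Gamma(N_{k_i})}F_3$ will contain no coarsely embedded expanders.

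The crux is verifying that the two numerical constraints $18(k_i+1)\le n_i$ and $A_{n_i,k_i}\le q^{\frac{19}{6}n_i}$ are compatible with $k_i\to\infty$. Towards this I would first unpack $A_{n_i,k_i}$ using \cref{ntok} and the second isomorphism theorem to obtain
\[A_{n_i,k_i} \;=\; [F_3:N_{n_i}]\cdot[N_{n_i}\Gamma(N_{k_i}):\Gamma(N_{k_i})] \;=\; |\PSL{2}{q^{n_i}}|\cdot[N_{k_i+1}:\Gamma(N_{k_i})].\]
The first factor is of order $q^{3n_i}$. The second is bounded above by $[N_{k_i}:\Gamma(N_{k_i})]$, which equals $q^{\beta_1}$ where $\beta_1$ is the first Betti number of the $6$-regular Cayley graph of $F_3/N_{k_i}\cong\PSL{2}{q^{k_i}}$; since this graph has $\sim q^{3k_i}$ vertices, one has $\beta_1 = 2|\PSL{2}{q^{k_i}}|+1 \lesssim q^{3k_i}$ by Nielsen--Schreier. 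Combining these, $A_{n_i,k_i}\le q^{3n_i + q^{3k_i}+C}$ for an absolute constant $C$, so the required bound $A_{n_i,k_i}\le q^{\frac{19}{6}n_i}$ reduces to a constraint of the shape $n_i \ge c\,q^{3k_i}$ for a universal $c$.

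With this estimate in hand I would simply set $k_i=i$ and pick $n_i$ strictly increasing satisfying both $n_i\ge 18(k_i+1)$ and $n_i\ge c\,q^{3k_i}$, for instance $n_i=\lceil c\,q^{3i}\rceil$ for $i$ sufficiently large (with the first few terms adjusted if necessary). Both $\{\Gamma(N_{k_i})\}$ and $\{N_{n_i}\cap\Gamma(N_{k_i})\}$ are then filtrations of $F_3$ (nestedness from the monotonicity of $(k_i)$ and $(n_i)$, trivial intersection from $k_i, n_i\to\infty$), so the hypotheses of \cref{Result} and \cref{NoExp} both hold, and their conclusions combine to give the theorem. The main obstacle is the combinatorial bookkeeping behind the Betti number / index estimate of the second paragraph, but this is standard once one recognises $[N_{k_i}:\Gamma(N_{k_i})]$ as the size of the first $\mathbb{Z}_q$-homology of the corresponding Cayley graph.
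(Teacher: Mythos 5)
Your proposal is correct and follows essentially the same route as the paper: invoke \cref{Result} for non-embeddability, invoke \cref{NoExp} against $\Box_{\Gamma(N_{k_i})}F_3$ (embeddable by \cite{Kh13}) for the absence of expanders, and verify the numerical constraints by bounding $A_{n_i,k_i}$ via the index of $\Gamma(N_{k_i})$ in $N_{k_i}$, which by Nielsen--Schreier gives a factor of order $q^{q^{3k_i}}$. The only cosmetic difference is that you compute $A_{n_i,k_i}$ exactly via the second isomorphism theorem and \cref{ntok} rather than using the paper's cruder diagonal-embedding inequality $A_{n_i,k_i}\le|F_3/N_{n_i}|\cdot|F_3/\Gamma(N_{k_i})|$, but both yield the same shape of bound and the same conclusion.
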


\begin{proof}
Let us first check that such a sequence $(n_i, k_i)$ exists. We have, using the information we have obtained in Sections \ref{Quat} and \ref{HomCovers} about the sizes of quotients in Figure 1, 
\begin{align*}
A_{n_i,k_i} & = |\Qo{n_i}{k_i}|\\
& \leq |F_3/N_{n_i}|\cdot |F_3/\Theta(N_{k_i})|\\
& \leq |F_3/N_{n_i}|\cdot |F_3/N_{k_i}|\cdot |N_{k_i}/\Theta(N_{k_i})|\\
& \leq |\PSL{2}{q^{n_i}}| \cdot |\PSL{2}{q^{k_i}}| \cdot |\mathbb{Z}_q^{2|\PSL{2}{q^{k_i}}|+1}|\\
& \leq q^4\cdot  q^{3(n_i-1)} \cdot q^4\cdot  q^{3(k_i-1)} \cdot q^{2(q^{4}q^{3(k_i-1)})+1}\\
& = q^{3n_i + 3k_i + 3 + 2q^{3k_i +1}}.
\end{align*}

This means that we need $3k_i + 3 + 2q^{3k_i +1}$ to be less than or equal to $\frac{1}{6}n_i$ in order to satisfy $A_{n_i,k_i}\leq q^{\frac{19}{6}n_i}$. Now it is clear that for a sequence of large enough $n_i$ we can take a sequence of $k_i$ which will simultaneously satisfy this and the condition $18<18(k_i+1)\le n_i$. By taking subsequences if necessary, we can ensure that the sequences $n_i$ and $k_i$ are increasing.

\cref{Result} gives us the first part of the statement. For the second part of the statement, we can now apply \cref{NoExp} to the box space $\Box_{N_{n_i}\cap\Theta(N_{k_i})} F_3$ and the box space $\Box_{\Theta(N_{k_i})} F_3$, which is coarsely embeddable into a Hilbert space by the main result in \cite{Kh13} (described in \cref{HomCovers}) as it is a sequence of $q$-homology covers of the graphs $F_3/N_{k_i}$ which satisfy the necessary conditions. 
\end{proof}

\section{Questions and remarks}\label{Qs}
\begin{itemize}
\item
A consequence of \cref{MainResult} is that it is possible to have two box spaces of the same group with respect to \emph{meshed} sequences of subgroups (i.e. sequences of subgroups $\{H_i\}$ and $\{K_i\}$ with $H_1 > K_1 > H_2 > K_2 > H_3 ...$) such that one of the box spaces coarsely embeds into a Hilbert space, and the other does not.

Indeed, after passing to a subsequence, we can find a box space $\Box_{N_{n_j}\cap\Theta(N_{k_j})} F_3$, that can be nested with the box space $\Box_{\Theta(N_{k_j})} F_3$ (this corresponds to taking subgroups which form a sequence of ``steps'' in our diagram of subgroup intersections). We know from \cite{Kh13} that $\Box_{\Theta(N_{k_j})} F_3$ embeds coarsely into $\ell^2$, while $\Box_{N_{n_j}\cap\Theta(N_{k_j})} F_3$ does not by the above.


\item
We have shown that one can choose a sequence in the triangle of intersections (Figure 1) which does not coarsely embed into a Hilbert space, but does not contain weakly embedded expanders. This sequence lies on a path that lies ``close enough'' to the horizontal expander sequence. In an upcoming paper, the first author proves that the horizontal sequences in such a triangle (or, more generally, covers of expanders of uniformly bounded degree) form an expander sequence. What can be said of other sequences in the triangle? Is there a relationship between $k_i$ and $n_i$ for the quotients $\Qo{n_i}{k_i}$ which guarantees coarse embeddability into a Hilbert space? Note that in \cite{DK}, it is shown that two box spaces $\Box_{N_n} G$ and $\Box_{M_n} G$ with $M_n > N_n$ and $[M_n:N_n]$ uniformly bounded independently of $n$ need not be coarsely equivalent.

\item
Let $\{N_n\}$ be a different sequence of subgroups of the free group which gives rise to an expander. Can one prove similar results?

\item
Do homology covers of quotients of non-free groups coarsely embed into a Hilbert space? If $G$ is a finitely generated group, not necessarily free, what can one say about the box space corresponding to the inductively defined sequence of subgroups $N_1:=G$, $N_{n+1}:=N_n^q [N_n, N_n]$? If this box space embeds, can one recreate the triangle argument in such a case?

\item
It is unknown whether there exists a bounded geometry metric space which does not coarsely embed into Hilbert space, but does coarsely embed into $\ell^p$ for some $p>2$. Note that such a space cannot contain coarsely embedded expanders. The box space constructed in this paper does not embed coarsely into $\ell^p$, by the same argument as in the proof of Proposition 4 of \cite{AT}.

\end{itemize}


\end{document}